\numberwithin{equation}{section}
\theoremstyle{plain}
\newtheorem{prop}{Proposition}[section]
\newtheorem{thm}[prop]{Theorem}
\theoremstyle{definition}
\newtheorem*{defi}{Definition}
\newtheorem{rem}[prop]{Remark}
\newtheorem*{nonumrem}{Remark}
\newtheorem*{example}{Example}
\DeclareMathOperator{\poisson}{Po}
\DeclareMathOperator{\binomial}{Bi}
\DeclareMathOperator{\diam}{diam}
\DeclareMathOperator*{\argmax}{arg \, max}
\newcommand{\Eta}{\mathrm{H}}
\newcommand{\inlawto}{\, \stackrel{\mathscr{D}}{\longrightarrow} \,}
\newcommand{\eqinlaw}{\, \stackrel{\mathscr{D}}{=} \,}
\newcommand{\bs}[1]{\boldsymbol{#1}}
\newcommand{\ssst}{\scriptscriptstyle}
\newcommand{\DD}{\mathbb{D}}
\newcommand{\EE}{\mathbb{E}}
\newcommand{\NN}{\mathbb{N}}
\newcommand{\PP}{\mathbb{P}}
\newcommand{\RR}{\mathbb{R}}
\newcommand{\ZZ}{\mathbb{Z}}
\newcommand{\Rplus}{\mathbb{R}_{+}}
\newcommand{\Zplus}{\mathbb{Z}_{+}}
\newcommand{\Leb}{\text{\textsl{Leb}}}
\newcommand{\dtv}{d_{TV}}
\newcommand{\dw}{d_{W}}
\newcommand{\dr}{d_{R}}
\newcommand{\drw}{d_{RW}}
\newcommand{\dzero}{d_{0}}
\newcommand{\done}{d_{1}}
\newcommand{\donebar}{\bar{d}_1}
\newcommand{\donep}{d_{1}^{(p)}}
\newcommand{\doneone}{d_{1}^{(1)}}
\newcommand{\doneinfty}{d_{1}^{{\ssst (} \infty {\ssst )}}}
\newcommand{\dtwo}{d_{2}}
\newcommand{\dtwobar}{\bar{d}_2}
\newcommand{\fwstar}{\mathcal{F}_{W}^{*}}
\newcommand{\ftwo}{\mathcal{F}_{2}}
\newcommand{\ftwobar}{\overline{\mathcal{F}}_{2}}
\newcommand{\abs}[1]{\lvert #1 \rvert}
\newcommand{\bigabs}[1]{\bigl| #1 \bigr|}
\newcommand{\Bigabs}[1]{\Bigl| #1 \Bigr|}
\newcommand{\biggabs}[1]{\biggl| #1 \biggr|}
\newcommand{\norm}[1]{\lVert #1 \rVert}
\newcommand{\ubar}{\bar{u}}
\newcommand{\xibar}{\bar{\xi}}
\newcommand{\etabar}{\bar{\eta}}
\newcommand{\tf}{\tilde{f}}
\newcommand{\tg}{\tilde{g}}
\newcommand{\tM}{\tilde{M}}
\newcommand{\tN}{\tilde{N}}
\newcommand{\tX}{\tilde{X}}
\newcommand{\tY}{\tilde{Y}}
\newcommand{\teta}{\tilde{\eta}}
\newcommand{\mcb}{\mathcal{B}}
\newcommand{\mca}{\mathcal{A}}
\newcommand{\mcn}{\mathcal{N}}
\newcommand{\mcx}{\mathcal{X}}
\newcommand{\mcy}{\mathcal{Y}}
\newcommand{\msl}{\mathscr{L}}
\newcommand{\mfn}{\mathfrak{N}}
\newcommand{\mfp}{\mathfrak{P}}
\newcommand{\tsum}{\textstyle{\sum}}
\newcommand{\bl}{\bs{\lambda}}
\newcommand{\bm}{\bs{\mu}}
\newcommand{\bn}{\bs{\nu}}
\newcommand{\RD}{\RR^D}
\newcommand{\nin}{\noindent}
\newcommand{\arrayeq}{\hspace*{-0.55em}=\hspace*{-0.55em}}
\newcommand{\arrayleq}{\hspace*{-0.55em}\leq\hspace*{-0.55em}}
\newcommand{\arraygeq}{\hspace*{-0.55em}\geq\hspace*{-0.55em}}
\renewcommand{\theprop}{\arabic{section}.\Alph{prop}}
\newenvironment{expla}{\comment}{\endcomment}
\newcommand{\mean}{\EE}
\newcommand{\prob}{\PP}
\newcommand{\Ref}{\eqref}
\begin{document}

\title{A new metric between distributions of point processes} 
\author{
Dominic Schuhmacher\footnote{Postal address: School of Mathematics and Statistics, The University of Western Australia, Crawley WA 6009, Australia. E-mail: dominic@maths.uwa.edu.au (corresponding author)}\\
The University of Western Australia\\[1mm]
and\\[1mm]
Aihua Xia\footnote{Postal address: Department of Mathematics and Statistics, The University of Melbourne, Parkville VIC 3010, Australia. E-mail: xia@ms.unimelb.edu.au}\\
The University of Melbourne\\[-3mm]
\hspace*{2mm}
}

\date{Version of 21 August, 2007}
\maketitle

\begin{abstract}
   Most metrics between finite point measures currently used in the literature have the flaw that 
   they do not treat differing total masses in an adequate manner for applications.
   This paper introduces a new metric $\donebar$ that combines positional differences of points
   under a closest match with the relative difference in total mass in a way that fixes this flaw.
   A comprehensive collection of theoretical results about $\donebar$ and its induced Wasserstein metric 
   $\dtwobar$ for point process distributions are given, including 
   examples of useful $\donebar$-Lipschitz continuous functions,
   $\dtwobar$ upper bounds for Poisson process approximation, and $\dtwobar$ upper and lower bounds between 
   distributions of point processes of i.i.d.\ points. Furthermore, we present a statistical test for 
   multiple point pattern data that demonstrates the potential of $\donebar$ in applications.\\[1mm]
   \textit{Keywords:} Wasserstein metric, point process, Poisson point process, Stein's method, 
   distributional approximation, statistical analysis of point pattern data\\[1mm]
   \textit{AMS 2000 subject classifications.} Primary 60G55; secondary 60F05, 62M30   
\end{abstract}

\section{Introduction} \label{sec:intro}

The two metrics most widely used on the space $\mfn$ of finite point measures on a compact metric space $(\mcx, \dzero)$ are the Prohorov metric $\varrho$ and the metric $\done$ that was introduced in \citet{bb92}. We use $\delta_x$ to stand for the Dirac measure at $x$.
For $\xi = \sum_{i=1}^m \delta_{x_i}, \eta = \sum_{i=1}^n \delta_{y_i} \in \mfn$, and $\dzero \leq 1$ the metric $\done$ is given by
\begin{equation} \label{eq:donedef}
   \done(\xi,\eta) := \min_{\pi \in \Pi_n} \frac{1}{n} \sum_{i=1}^{n} \dzero(x_i, y_{\pi(i)})
\end{equation}
if $m=n \geq 1$ and $\done(\xi,\eta) := 1$ if $m \neq n$, where $\Pi_n$ denotes the set of permutations of $\{1,2,\ldots,n\}$. The gap between $\done =: \doneone$ and $\varrho \wedge 1 =: \doneinfty$ can be bridged by metrics $\donep$ where the average in~\eqref{eq:donedef} is replaced by a general $p$-th order average \citep[see][]{schumi5}.

All of these metrics are good choices from a theoretical point of view, because they metrize the natural vague topology on $\mfn$. Furthermore, especially $d_1$ has been highly successful as an underlying metric for defining a Wasserstein metric $\dtwo$ between point process distributions: letting $\ftwo := \bigl\{ f: \mfn \to [0,1] ; \, \abs{f(\xi)-f(\eta)} \leq \done(\xi,\eta) \text{ for all } \xi, \eta \in  \mfn \bigr\}$, we set
\begin{equation} \label{eq:dtwodef}
   \dtwo(P,Q) := \sup_{f \in \ftwo} \biggabs{ \int f \; dP - \int f \; dQ}
\end{equation}
for any two probability measures $P$ and $Q$ on $\mfn$. Numerous useful upper bounds in this metric
have been obtained; included amongst them are the results of \citet{bb92}, \citet{brownxia95}, \citet{brownxia01}, \citet{barbourmansson02}, \citet{chenxia04}, \citet{schumi4}, and \citet{schumi5}, which for the most part assume that one of the probability measures involved is a Poisson (or compound Poisson) process distribution.
Such estimates can be used to compare the distributions of point pattern statistics $S(\Xi)$, where $S \in \ftwo$, for different underlying point process models, since the Wasserstein distance $\dw \bigl( \msl(S(\Xi)),\msl(S(\Xi')) \bigr)$ (see pp.~254--255 of~\citet{bhj92}) is easily seen to be bounded by $\dtwo \bigl( \msl(\Xi), \msl(\Xi') \bigr)$. For a concrete example where this was exploited, see \citet[Section~3.2]{schumi1}.

However, there are certain limitations with respect to the practical applications of the metric~$\done$ (as well as of the other metrics between point measures that were mentioned), which are mainly due to the fact that $\done(\xi,\eta)$ is always set to the maximal distance $1$ if the total numbers of points of the point patterns $\xi$ and $\eta$ disagree. Such crude treatment results in a metric that does usually not reflect very well our intuitive idea of two point patterns being ``far apart'' from one another if the cardinalities of the point patterns are different, as can be seen from the extreme case illustrated in Figure~\ref{fig:spotthediff}. This flaw is, in our opinion, the main reason why such metrics have not been taken up in more application-oriented fields, such as spatial statistics. 

In the present article we introduce a new metric $\donebar$, which refines the metric $\done$ in the sense that $\donebar(\xi,\eta) = \done(\xi,\eta)$ if the cardinalities of the two point patterns $\xi$ and $\eta$ agree, but $\donebar(\xi,\eta)$ can take general values in $(0,1]$ if the cardinalities disagree. In particular, $\donebar$ assigns a large distance if the difference of the numbers of points is large compared to the total number of points in the point pattern with more points and it takes the quality of point matchings into account even if the total numbers are not the same.

While $\donebar$ is a slightly weaker metric than $\done$, it still metrizes the same topology as $\done$, and its induced Wasserstein metric $\dtwobar$ still metrizes convergence in distribution of point processes and provides an upper bound for the Wasserstein distance $\dw \bigl( \msl(S(\Xi)),\msl(S(\Xi') \bigr)$ for many of the useful point pattern statistics $S$ that $\dtwo$ does. As far as Poisson process approximation is concerned, we are able to obtain better bounds in the $\dtwobar$-metric than in the stronger $\dtwo$-metric for a wide range of situations. We furthermore present a simulation study that assesses the powers of certain tests based on $\donebar$ and demonstrates its usefulness in spatial statistics.

\begin{figure}[t]
\hspace*{2mm}
\vspace*{-9mm}

\begin{center}
\includegraphics[width=40mm,angle=-90]{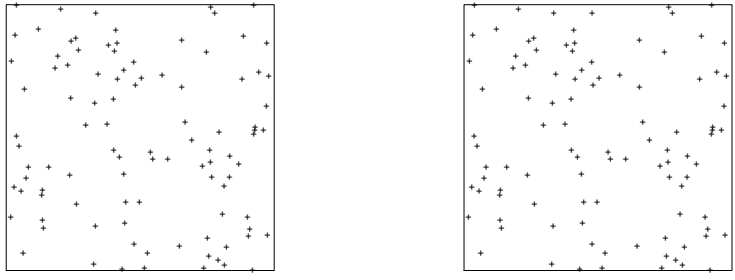}
\vspace*{-3mm}

\caption{\small The left is a realization of 99 independent and uniformly distributed points and the right is the same as the left except an additional point is added. Intuitively, we would say both point patterns are very similar. However, the $\done$-distance between the two is maximal, whereas the $\donebar$-distance is only 0.01 (out of a possible range of [0,1])}
\label{fig:spotthediff}
\end{center}
\vspace*{-8mm}

\hspace*{2mm}
\end{figure}

\section{Definition and elementary properties} \label{sec:defi}

Let $(\mcx, \dzero)$ be a compact metric space with $\dzero \leq 1$, on which we always consider the Borel $\sigma$-algebra $\mcb$. Denote the space of all finite point measures on $\mcx$ by $\mfn$ and equip it as usual with the vague topology and the $\sigma$-algebra $\mcn$ generated by this topology, which is the smallest $\sigma$-algebra that renders the point counts on measurable sets measurable (see \citealp{kallenberg86}, Section~1.1, Lemma~4.1, and Section~15.7). Recall that a point process is just a random element of $\mfn$. 
\begin{defi}
   Let $\donebar$ be the symmetric map $\mfn^2 \to \Rplus$ that is given by
   \begin{equation*}
      \donebar(\xi,\eta) := \frac{1}{n} \biggl( \min_{\pi \in \Pi_n} \sum_{i=1}^{m}
         \dzero(x_i, y_{\pi(i)}) + (n-m) \biggr)
   \end{equation*}
   for $\xi = \sum_{i=1}^m \delta_{x_i}, \eta = \sum_{j=1}^n \delta_{y_j} \in \mfn$ with
   $n \geq \max(m,1)$, and $\donebar(0,0):=0$.   
\end{defi}
\nin
In essence, we arrange for $\xi$ and $\eta$ to have the same number of points by introducing extra points located at distance $1$ from $\mcx$, and then take the average distance between the points under a closest match (which is the $\done$-distance).

\begin{prop} \label{prop:donebarismetric}
   The map $\donebar$ is a metric that is bounded by $1$.
\end{prop}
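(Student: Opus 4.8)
The plan is to verify the metric axioms one at a time, the only nontrivial one being the triangle inequality. I would first make the description preceding the proposition precise: adjoin a cemetery point $\Delta$ to $\mcx$, put $\mcxbar := \mcx \discup \{\Delta\}$, and extend $\dzero$ to $\mcxbar$ by setting $\bar{d}_0(\Delta,x) := 1$ for all $x \in \mcx$; since $\dzero \le 1$, this $\bar{d}_0$ is immediately seen to be a metric bounded by $1$. For $\xi \in \mfn$ with total mass $\abs\xi$ and an integer $N \ge \abs\xi$, write $\xi^{[N]}$ for $\xi$ with $N - \abs\xi$ further points placed at $\Delta$, a point measure on $\mcxbar$ with exactly $N$ points, and let $\sigma_N(\xi,\eta)$ be the minimal $\bar{d}_0$-cost of a perfect matching between the $N$ points of $\xi^{[N]}$ and the $N$ points of $\eta^{[N]}$. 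A small but essential observation is that $\sigma_N(\xi,\eta)$ does not depend on $N$ once $N \ge \abs\xi \vee \abs\eta$: in an optimal matching one never has both an edge from a $\Delta$ on one side to a genuine point on the other and an edge from a genuine point to a $\Delta$, since swapping such a pair of edges turns a cost of $2$ into a cost of at most $1$; consequently an optimal matching pairs the $\Delta$'s of the lighter of $\xi,\eta$ with $\Delta$'s of the heavier one, matches the $\bigabs{\abs\xi-\abs\eta}$ surviving $\Delta$'s to genuine points at cost $1$ each, and matches the genuine points of the lighter measure optimally among the genuine points of the heavier one. Denoting this common value by $D(\xi,\eta)$ and taking $N = \abs\xi\vee\abs\eta$, one reads off from the Definition (after noting that $\min_{\pi\in\Pi_n}\sum_{i=1}^m$ there is a minimum over injections $\{1,\dots,m\}\to\{1,\dots,n\}$) that
\[
   \donebar(\xi,\eta) = \frac{D(\xi,\eta)}{\abs\xi\vee\abs\eta}
   \qquad\text{whenever }(\xi,\eta)\neq(0,0).
\]

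From this formula nonnegativity is clear; $\donebar \le 1$ follows since every matched pair costs at most $1$, so $D(\xi,\eta) \le \abs\xi\vee\abs\eta$; symmetry is built in; and $\donebar(\xi,\eta)=0$ forces $D(\xi,\eta)=0$, hence equal cardinalities and a cost-zero matching, hence $\xi=\eta$. For the triangle inequality put $l:=\abs\xi$, $m:=\abs\eta$, $n:=\abs\zeta$; by symmetry I may assume $l\le n$, and I may also assume $\donebar(\xi,\eta)+\donebar(\eta,\zeta)<1$ (otherwise the claim is trivial from $\donebar\le 1$), which rules out the zero measure. Composing an optimal matching between $\xi^{[N]}$ and $\eta^{[N]}$ with one between $\eta^{[N]}$ and $\zeta^{[N]}$ at the common padding level $N:=l\vee m\vee n$, and applying the triangle inequality of $\bar{d}_0$ edge by edge, gives the unnormalised bound $D(\xi,\zeta)\le D(\xi,\eta)+D(\eta,\zeta)$. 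If $m\le n$, then $\abs\xi\vee\abs\eta\le n$ and $\abs\eta\vee\abs\zeta=n$, so dividing this bound by $n$ and using $\abs\xi\vee\abs\eta\le n$ already yields $\donebar(\xi,\zeta)\le\donebar(\xi,\eta)+\donebar(\eta,\zeta)$.

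The only genuinely delicate case is $m>n$, where $\eta$ strictly dominates both $\xi$ and $\zeta$, the two sides of the desired inequality carry the different normalisations $n$ and $m$, and dividing the unnormalised bound by $n$ is too wasteful. Here both distances on the right are normalised by $m$, so, writing $E:=D(\xi,\eta)+D(\eta,\zeta)$, the standing assumption is $E<m$. Separating the pure genuine-point parts $B:=D(\xi,\eta)-(m-l)\ge 0$ and $C:=D(\eta,\zeta)-(m-n)\ge 0$, realised by injections $\sigma:\{1,\dots,l\}\to\{1,\dots,m\}$ and $\tau:\{1,\dots,n\}\to\{1,\dots,m\}$, one has $B+C=E-(2m-l-n)$, and $E<m$ forces $l+n>m$, so $\im\sigma$ and $\im\tau$ meet in at least $l+n-m$ indices. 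I would then build a matching between $\xi$ and $\zeta$ by linking, for each common index $j$, the point $x_{\sigma^{-1}(j)}$ to $z_{\tau^{-1}(j)}$ at cost at most $\dzero(x_{\sigma^{-1}(j)},y_j)+\dzero(z_{\tau^{-1}(j)},y_j)$ (triangle inequality in $\mcx$) and matching the at most $m-n$ remaining points of $\xi$ to remaining points of $\zeta$ at cost at most $1$ each; summing and bounding the partial $\dzero$-sums by $B$ and $C$ gives $D(\xi,\zeta)\le (n-l)+B+C+(m-n)=E-(m-n)$. Dividing by $n$ and comparing with $\donebar(\xi,\eta)+\donebar(\eta,\zeta)=E/m$, the inequality to be verified is $m\bigl(E-(m-n)\bigr)\le nE$, i.e.\ $(m-n)E\le m(m-n)$, which holds because $E<m$ and $m>n$.

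I expect this last case to be the heart of the matter and the likeliest source of slips: away from it the proposition is little more than a restatement of the classical fact that the unnormalised assignment cost on a fixed number of points of $\mcxbar$ is a metric, whereas when the middle measure is the largest the normalisation works against us, and one must exploit the forced overlap (of size at least $l+n-m$) of the two near-optimal matchings of $\xi$ and of $\zeta$ into $\eta$ to splice them into a matching of $\xi$ with $\zeta$ whose excess cost is exactly what the smaller normalising constant can still absorb. Minor points to be careful about are the $N$-independence of $\sigma_N$ and the identification of $\min_{\pi\in\Pi_n}\sum_{i=1}^m$ in the Definition with a minimum over injections, both of which underpin the clean identity $\donebar(\xi,\eta)=D(\xi,\eta)/(\abs\xi\vee\abs\eta)$.
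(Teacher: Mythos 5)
Your proof is correct, but it is organised quite differently from the paper's. The paper also pads with auxiliary points at distance $1$, but it never isolates the unnormalised assignment cost as a metric in its own right; instead it proves the equal-cardinality inequality \eqref{eq:samecard} and splits according to whether the larger endpoint exceeds the intermediate pattern, handling the case ``intermediate largest'' in one line via the monotonicity $a/m \leq (a+n-m)/n$ for $a\le m$, and handling the case ``endpoint larger than intermediate'' by selecting a sub-pattern $y_1',\dots,y_{l\vee n}'$ of the big endpoint and chaining through \eqref{eq:samecard}. Your route — the identity $\donebar(\xi,\eta)=D(\xi,\eta)/(\abs{\xi}\vee\abs{\eta})$ with $D$ the padding-level-independent optimal matching cost over $\mcx\cup\{\Delta\}$, plus the unnormalised triangle inequality $D(\xi,\zeta)\le D(\xi,\eta)+D(\eta,\zeta)$ by composing matchings — makes the paper's delicate case immediate (the left-hand normalisation is then the largest, so dividing by it costs nothing), and it also gives positivity, symmetry, the bound by $1$ and the identification of indiscernibles for free. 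The price is that the configuration the paper dispatches trivially (intermediate pattern strictly largest) becomes your hard case, which you resolve correctly by the overlap/splicing argument: the nontriviality assumption $E<m$ forces $l+n>m$, the two injections into the middle pattern overlap in at least $l+n-m$ indices, and the spliced matching has cost at most $E-(m-n)$, which the smaller normalising constant $n$ can absorb precisely because $E<m$. It is worth noting that this case also yields to the paper's one-line device: padding both endpoints up to $m$ points with cemetery points leaves their distances to the middle pattern unchanged and can only increase $\donebar(\xi,\zeta)$ (by the same $a/n\le(a+m-n)/m$ monotonicity), after which the equal-cardinality inequality finishes the proof — so your splicing computation, while sound, is more labour than strictly necessary there.
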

\nin
The proof of this proposition, as well as further proofs that are of a more technical nature and would otherwise disrupt the flow of the main text can be found in the appendix.
It is convenient to introduce the ``relative difference metric'' $\dr$ on $\Zplus$, which is given by
$\dr(m,n) := \abs{m-n} \big/ \max(m,n)$ for $\max(m,n) > 0$. The triangle inequality for $\dr$ follows immediately from the triangle inequality for $\donebar$, because we have $\dr(m,n) = \donebar \bigl( m \delta_x, n \delta_x \bigr)$. 
\begin{prop} \label{prop:donebarprops}
   The following statements about $\donebar$ hold.
   \begin{enumerate}
      \item $\dr(\abs{\xi},\abs{\eta}) \leq \donebar(\xi,\eta) \leq \done(\xi,\eta)$ for all $\xi, \eta \in
      \mfn$;
      \item $\donebar$ metrizes the vague (=weak) topology on $\mfn$;
      \item The metric space $(\mfn, \donebar)$ is locally compact, complete, and separable. 
   \end{enumerate}
\end{prop}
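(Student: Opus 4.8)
The plan is to dispatch the three assertions in turn, reducing each to $\done$ and to elementary properties of the relative difference metric $\dr$ on $\Zplus$. For part~(i) everything is immediate from the definitions. For the upper bound: if $\abs{\xi}=\abs{\eta}$ the correction term $n-m$ in the definition of $\donebar$ vanishes, so $\donebar(\xi,\eta)=\done(\xi,\eta)$; if $\abs{\xi}\neq\abs{\eta}$ then $\done(\xi,\eta)=1\geq\donebar(\xi,\eta)$ by Proposition~\ref{prop:donebarismetric}. For the lower bound write $n:=\abs{\eta}$, $m:=\abs{\xi}$ and assume $n\geq m$ (both sides are symmetric); the case $n=0$ is trivial, and for $n\geq1$, since the matching sum $\min_{\pi\in\Pi_n}\sum_{i=1}^m\dzero(x_i,y_{\pi(i)})$ is nonnegative, $\donebar(\xi,\eta)\geq(n-m)/n=\dr(\abs{\xi},\abs{\eta})$.

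For part~(ii) I would show that $\donebar$ is topologically equivalent to $\done$ and then invoke the fact, recalled in Section~\ref{sec:intro}, that $\done$ metrizes the vague topology on $\mfn$. Since the topology of a metric space is determined by its convergent sequences, it suffices to check that $\done(\xi_k,\xi)\to0\iff\donebar(\xi_k,\xi)\to0$ for every sequence $(\xi_k)$ in $\mfn$ and every $\xi\in\mfn$. The implication ``$\Rightarrow$'' is part of~(i). For ``$\Leftarrow$'', part~(i) gives $\dr(\abs{\xi_k},\abs{\xi})\to0$, and the key point is that this forces $\abs{\xi_k}=\abs{\xi}$ for all large $k$: writing $n:=\abs{\xi}$, the case $n=0$ holds because $\dr(m,0)=1$ whenever $m\geq1$, while for $n\geq1$ the sequence $(\abs{\xi_k})$ must be bounded (otherwise $\dr(\abs{\xi_k},n)\to1$ along a subsequence) and a bounded integer sequence $\dr$-converging to $n$ is eventually equal to $n$. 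For such $k$ one has $\donebar(\xi_k,\xi)=\done(\xi_k,\xi)$, hence $\done(\xi_k,\xi)\to0$.

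For part~(iii) the same observation shows that the total-mass map $\eta\mapsto\abs{\eta}$ is continuous from $(\mfn,\donebar)$ to $\Zplus$ with the discrete topology, so every cardinality class $\mfn_n:=\{\eta\in\mfn:\abs{\eta}=n\}$ is open and closed. Each $\mfn_n$ is moreover compact: for $n\geq1$ it is the image of the compact space $\mcx^n$ under the map $(x_1,\dots,x_n)\mapsto\sum_{i=1}^n\delta_{x_i}$, which is $1$-Lipschitz from the sup-metric into $\donebar$ (on $\mfn_n$ one has $\donebar=\done$, and $\done(\sum\delta_{x_i},\sum\delta_{y_i})\leq\max_i\dzero(x_i,y_i)$), while $\mfn_0=\{0\}$. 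Hence every point of $\mfn$ has a compact neighbourhood, which gives local compactness, and $\mfn=\bigcup_{n\geq0}\mfn_n$ is a countable union of compact metric, hence separable, subspaces, so $\mfn$ is separable. Completeness I would prove directly: if $(\xi_k)$ is $\donebar$-Cauchy, then $(\abs{\xi_k})$ is $\dr$-Cauchy by~(i), and a $\dr$-Cauchy integer sequence is eventually constant (if $\dr(m_k,m_l)<1/2$ for all $k,l\geq K$, then either some $m_l=0$, forcing $m_k=0$ for $k\geq K$, or all $m_k\geq1$ and $m_k<2m_l$ for $k,l\geq K$, so the sequence is bounded, takes finitely many values, and $\dr$ is bounded away from $0$ on any finite set of positive integers); say $\abs{\xi_k}=n$ for $k\geq K_0$. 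If $n=0$ then $\xi_k=0\to0$; if $n\geq1$ then $(\xi_k)_{k\geq K_0}$ is a Cauchy sequence in $(\mfn_n,\done)$, which is compact and hence complete, so $\xi_k$ converges in $\done$, and therefore in $\donebar$.

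I do not expect any single step to be difficult; the labour lies in the bookkeeping around the empty measure $0$ and in the two elementary facts about $\dr$ on $\Zplus$ used above---that $\dr$-convergence to an integer, and $\dr$-Cauchyness, each force eventual constancy of the sequence. The one conceptual point worth flagging is that completeness, unlike local compactness and separability, is not a topological invariant and so cannot be read off the equivalence with $\done$ from part~(ii); it genuinely requires the direct argument.
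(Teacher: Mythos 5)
Your proposal is correct and follows essentially the same route as the paper: part (i) from the definitions, part (ii) by showing $\donebar$- and $\done$-convergence are equivalent because $\dr(\abs{\xi_k},\abs{\xi})\to0$ forces the cardinalities to agree eventually, and completeness in part (iii) by noting a $\donebar$-Cauchy sequence has eventually constant cardinality and its tail lies in a compact class $\mfn_n$. The only deviation is that you prove local compactness and separability directly via the clopen compact cardinality classes, where the paper simply appeals to topological invariance and \citet{xia05}, Proposition~4.3; your filled-in elementary facts about $\dr$ (convergence and Cauchyness force eventual constancy) are exactly the steps the paper labels ``straightforward''.
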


We next define the metric $\dtwobar$ on the space $\mfp(\mfn)$ of probability distributions on $(\mfn, \mcn)$ just as the Wasserstein metric with respect to $\donebar$.
\begin{defi}
Let $\ftwobar := \{ f : \mfn \to [0,1] \, ; \; \abs{f(\xi) - f(\eta)} \leq \donebar(\xi,\eta) \text{ for all $\xi, \eta \in \mfn$} \}$. Set then
$$
   \dtwobar(P,Q) := \sup_{f \in \ftwobar} \biggabs{\int_{\mfn} f \; dP - \int_{\mfn} f \; dQ}
$$
for $P,Q \in \mfp(\mfn)$.
\end{defi}
Since this is exactly the Wasserstein construction (the fact that we restrict the functions in $\ftwobar$ to be $[0,1]$-valued has no influence on the supremum, because the underlying $\donebar$-metric is bounded by $1$), it is clear that $\dtwobar$ is a metric that is obviously bounded by $1$, and we can easily derive basic properties. For two probability distributions $\mu$ and $\nu$ on $\Zplus$, write
$\drw(\mu,\nu) := \min_{M \sim \mu, N \sim \nu} \EE \dr(M,N)$, which is the Wasserstein distance with respect to $\dr$ (compare property~(i) below).
\begin{prop} \label{prop:dtwobarprops}
   The metric $\dtwobar$ satisfies
   \begin{enumerate}
      \item $\dtwobar \bigl( P, Q \bigr) = \min_{\substack{\Xi \sim P \\ \Eta \sim Q}} \: \EE \,
			\donebar(\Xi,\Eta)$ for all $P, Q \in \mfp(\mfn)$;
      \item $\drw \bigl( \msl(\abs{\Xi}), \msl(\abs{\Eta}) \bigr) \leq \dtwobar \bigl( \msl(\Xi),
         \msl(\Eta) \bigr) \leq \dtwo \bigl( \msl(\Xi), \msl(\Eta) \bigr)$ for any point processes
         $\Xi$ and $\Eta$;
      \item $\dtwobar$ metrizes the weak topology on $\mfp(\mfn)$, so that $\Xi_n \inlawto \Xi$ iff
         $\dtwobar \bigl( \msl(\Xi_n), \msl(\Xi) \bigr) \longrightarrow 0$.
   \end{enumerate}
\end{prop}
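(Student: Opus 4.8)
The plan is to handle the three statements in turn. Throughout I use Proposition~\ref{prop:donebarprops}, together with the observation already made in the text that requiring the test functions in $\ftwobar$ to be $[0,1]$-valued does not alter the supremum defining $\dtwobar$: since $\donebar \le 1$, any $\donebar$-Lipschitz function has oscillation at most $1$, so subtracting a suitable constant (which leaves $\int f\,dP - \int f\,dQ$ unchanged) makes it take values in $[0,1]$. For part~(i) I would invoke the Kantorovich--Rubinstein duality. It applies because $(\mfn,\donebar)$ is a complete separable metric space by Proposition~\ref{prop:donebarprops}(iii) and $\donebar$ is bounded, so the primal transport problem is finite; this gives $\dtwobar(P,Q) = \inf\{\EE\,\donebar(\Xi,\Eta) : \Xi \sim P,\ \Eta \sim Q\}$, just as for $\dtwo$ (see, e.g., \citealp{bhj92}, pp.~254--255). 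To upgrade the infimum to a minimum I would note that the couplings of $P$ and $Q$ form a tight family of probability measures on $\mfn \times \mfn$ (as $P$ and $Q$ are tight on the Polish space $\mfn$), hence a weakly compact one by Prohorov's theorem, and that $\nu \mapsto \int \donebar\,d\nu$ is weakly continuous on this family because $\donebar$ is bounded and continuous; a minimiser therefore exists.

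Part~(ii) follows quickly from~(i). For the upper bound, Proposition~\ref{prop:donebarprops}(i) gives $\donebar \le \done$, so every $f \in \ftwobar$ is also $\done$-Lipschitz, i.e.\ $\ftwobar \subseteq \ftwo$, and taking suprema yields $\dtwobar(\msl(\Xi),\msl(\Eta)) \le \dtwo(\msl(\Xi),\msl(\Eta))$. For the lower bound, pick an optimal coupling $(\Xi^*,\Eta^*)$ of $\msl(\Xi)$ and $\msl(\Eta)$ as furnished by~(i). Then $(\abs{\Xi^*},\abs{\Eta^*})$ is a coupling of $\msl(\abs{\Xi})$ and $\msl(\abs{\Eta})$, and since Proposition~\ref{prop:donebarprops}(i) gives $\donebar(\xi,\eta) \ge \dr(\abs{\xi},\abs{\eta})$ for all $\xi,\eta$, we get $\dtwobar(\msl(\Xi),\msl(\Eta)) = \EE\,\donebar(\Xi^*,\Eta^*) \ge \EE\,\dr(\abs{\Xi^*},\abs{\Eta^*}) \ge \drw(\msl(\abs{\Xi}),\msl(\abs{\Eta}))$.

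For part~(iii), since $\mfn$ is separable the weak topology on $\mfp(\mfn)$ is metrizable, so it is enough to show that $\dtwobar(\msl(\Xi_n),\msl(\Xi)) \to 0$ is equivalent to $\Xi_n \inlawto \Xi$. If $\Xi_n \inlawto \Xi$ --- equivalently, $\msl(\Xi_n) \to \msl(\Xi)$ weakly in the $\donebar$-topology, which by Proposition~\ref{prop:donebarprops}(ii) is the vague (i.e.\ weak) topology on $\mfn$ --- then the Skorokhod representation theorem on the Polish space $(\mfn,\donebar)$ produces $\tXi_n \eqinlaw \Xi_n$ and $\tXi \eqinlaw \Xi$ with $\donebar(\tXi_n,\tXi) \to 0$ almost surely, whence $\dtwobar(\msl(\Xi_n),\msl(\Xi)) \le \EE\,\donebar(\tXi_n,\tXi) \to 0$ by~(i) and bounded convergence ($\donebar \le 1$). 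Conversely, if $\dtwobar(\msl(\Xi_n),\msl(\Xi)) \to 0$, then $\abs{\EE f(\Xi_n) - \EE f(\Xi)} \to 0$ for every bounded $\donebar$-Lipschitz $f$ (each such $f$ being, up to a positive scalar and an additive constant, a member of $\ftwobar$); since the bounded Lipschitz functions are convergence-determining on a metric space, this yields $\msl(\Xi_n) \to \msl(\Xi)$ weakly in the $\donebar$-topology, i.e.\ $\Xi_n \inlawto \Xi$ by Proposition~\ref{prop:donebarprops}(ii) again. An alternative route is to note that $\dtwobar$ is sandwiched between the bounded-Lipschitz metric for $\donebar$ and twice that metric, which classically metrizes weak convergence on $\mfp(\mfn)$.

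None of the three parts is deep, but the one requiring genuine care is~(i): one must check that the Kantorovich--Rubinstein theorem is applied under the correct hypotheses, justify the passage from an infimum to an attained minimum, and dispose of the $[0,1]$-valued restriction on the test functions. Granting part~(i), parts~(ii) and~(iii) are then essentially bookkeeping with Proposition~\ref{prop:donebarprops}.
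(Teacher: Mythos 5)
Your proposal is correct and follows essentially the same route as the paper: Kantorovich--Rubinstein duality on the complete separable space $(\mfn,\donebar)$ for (i), taking expectations/minima in Proposition~\ref{prop:donebarprops}(i) for (ii), and the equivalence of $\dtwobar$-convergence with weak convergence via bounded $\donebar$-Lipschitz (Dudley-type bounded-Lipschitz) functions together with Proposition~\ref{prop:donebarprops}(ii) for (iii). You merely write out details (attainment of the minimum, Skorokhod representation) that the paper delegates to Dudley (1989, Section~11.8 and Theorem~11.3.3).
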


\section{Lipschitz continuous functions}

By the definition of $\dtwobar$, upper bounds for a distance $\dtwobar \bigl( \msl(\Xi), \msl(\Eta) \bigr)$ also bound the difference $\bigabs{\EE f(\Xi) - \EE f(\Eta)}$  for any $f \in \ftwobar$. It is thus of considerable interest for the application of estimates such as those obtained in Section~\ref{sec:dests} to have a certain supply of ``meaningful'' $\donebar$-Lipschitz continuous statistics of point patterns (where we do not worry too much about the Lipschitz constant as it will only appear as an additional factor in the upper bound).

For the $\done$-metric, a selection of such statistics was given in Section~10.2 of~\citet{bhj92} and in Subsection~3.3.1 of \citet{mythesis}. Since $\donebar$ is in general strictly smaller than $\done$, we cannot reasonably expect all of these functions to lie in $\ftwobar$. However, we are able to recover many of the most important examples, which is illustrated by the two propositions below. This is mainly due to the fact that these functions take all the points in the pattern into account without fundamentally distinguishing how many there are, which is a situation where a $\done$-Lipschitz condition typically provides too much room in the upper bound. 

Our first proposition concerns certain $U$-statistics with Lipschitz continuous kernels (the former are usually considered for a fixed number of points, but the extension is obvious). See \cite{lee90} for detailed results about such statistics. 
\begin{prop}  \label{prop:ftwopfunctions}
Suppose that $\mcy \supset \mcx$ and extend the metric $\dzero$ to $\mcy$ in such a way that it is still bounded by $1$. Fix $l \in \NN := \{1,2,\ldots\}$ and  write $\mfn_{l+} := \{ \xi \in \mfn ; \, \abs{\xi} \geq l \}$. Let $K : \mcy^l \to [0,1]$ be a symmetric function that satisfies
\begin{enumerate}
   \item $\bigabs{ K(u_1,\ldots,u_l) - K(v_1,\ldots,v_l)} \leq \frac{1}{l} \sum_{i=1}^l \dzero(u_i,v_i)$
      for all $u_1, \ldots, u_l, v_1, \ldots, v_l \in \mcy$;
   \item for every $N \in \NN$ there are $\ubar_1, \ldots, \ubar_N \in \mcy$ such that for any $k \in \{
      1, \ldots, l \}$ and any selection $1 \leq i_1 < \ldots < i_k \leq N$ of $k$ indices
      \begin{equation*} 
         K(\ubar_{i_1}, \ubar_{i_2}, \ldots, \ubar_{i_k}, u_{k+1}, u_{k+2}, \ldots, u_l) \geq K(u_1, u_2,
         \ldots, u_k, u_{k+1}, u_{k+2}, \ldots, u_l) 
      \end{equation*}
      for all $u_1, u_2, \ldots, u_l \in \mcx$; 
   \item for every $k \in \{ 1, \ldots, l \}$ we have 
      \begin{equation*}
         K(u_1, u_1, \ldots, u_1, u_{k+1}, u_{k+2}, \ldots, u_l) \leq K(u_1, u_2, \ldots, u_k, u_{k+1}, 
         u_{k+2}, \ldots, u_l)
      \end{equation*}
      for all $u_1, u_2, \ldots, u_l \in \mcx$.      
\end{enumerate}
Define $f: \mfn_{l+} \to [0,1]$ by
\begin{equation} \label{eq:ftwopfunctions}
   f(\xi) := \frac{1}{{m \choose l}} \sum_{1 \leq i_1 < i_2 < \ldots < i_l \leq m} 
      K(x_{i_1}, x_{i_2}, \ldots, x_{i_l})
\end{equation}
for $\xi = \sum_{i=1}^m \delta_{x_i} \in \mfn$ with $m \geq l$.
Then there exists an extension $F$ of $f$ to the whole of $\mfn$ such that \raisebox{0pt}[11pt][0pt]{$F \in \ftwobar$.}
\end{prop}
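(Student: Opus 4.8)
The plan is to write down an explicit extension $F$ and verify the defining inequality $\abs{F(\xi)-F(\eta)}\le\donebar(\xi,\eta)$ by separating, in each comparison, a \emph{positional} from a \emph{cardinality} contribution. Fix once and for all points $\ubar_1,\ldots,\ubar_l\in\mcy$ (condition~(ii) with $N=l$ provides them, though for the estimates below any fixed points of $\mcy$ would serve), and set $F(\xi):=f(\xi)$ for $\abs{\xi}\ge l$, while for $\xi=\sum_{i=1}^m\delta_{x_i}$ with $m<l$ put $F(\xi):=K(x_1,\ldots,x_m,\ubar_1,\ldots,\ubar_{l-m})$; symmetry of $K$ makes this well defined, it takes values in $[0,1]$, and it extends $f$, so only the Lipschitz bound needs checking. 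For that I reduce to two special cases. Given $\xi=\sum_{i=1}^m\delta_{x_i}$ and $\eta=\sum_{j=1}^n\delta_{y_j}$ with $1\le m\le n$ (the case $m=0$ being immediate), choose a matching attaining $\donebar(\xi,\eta)$, relabel the $y_j$ so that $x_i$ pairs with $y_i$ for $i\le m$, and put $\zeta:=\sum_{i=1}^m\delta_{x_i}+\sum_{i=m+1}^n\delta_{y_i}$. Then $\done(\zeta,\eta)\le\tfrac1n\sum_{i=1}^m\dzero(x_i,y_i)$ and $\donebar(\xi,\zeta)=\tfrac{n-m}{n}$, so their sum is at most $\donebar(\xi,\eta)$, and by the triangle inequality it suffices to prove (A) $\abs{F(\xi')-F(\eta')}\le\done(\xi',\eta')$ whenever $\abs{\xi'}=\abs{\eta'}\ge1$, and (B) $\abs{F(\xi)-F(\zeta)}\le(n-m)/n$ whenever $\zeta$ arises from $\xi$ by adding $n-m$ further points.

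For~(A), let $n=\abs{\xi'}=\abs{\eta'}$. If $n\ge l$, both sides use $F=f$, and the claim is the familiar fact that a $U$-statistic with a kernel obeying~(i) is $\done$-Lipschitz: relabel along an optimal matching, expand $f(\xi')-f(\eta')$ over $l$-subsets $S$, bound each summand by $\tfrac1l\sum_{i\in S}\dzero(x'_i,y'_i)$ using~(i), and use that each index lies in $\binom{n-1}{l-1}=\tfrac ln\binom nl$ of the subsets, which brings the sum down to $\tfrac1n\sum_i\dzero(x'_i,y'_i)=\done(\xi',\eta')$. If $n<l$, then $F(\xi')$ and $F(\eta')$ are single $K$-values carrying identical $\ubar$-padding, so~(i), applied after an optimal matching, yields $\abs{F(\xi')-F(\eta')}\le\tfrac nl\,\done(\xi',\eta')\le\done(\xi',\eta')$.

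For~(B), write $\zeta=\xi+\sum_{j=1}^{n-m}\delta_{w_j}$. If $n\le l$, then $F(\xi)$ and $F(\zeta)$ are single $K$-values; pair up their $l$ arguments so that the $m$ points of $\xi$ and the $l-n$ shared padding points contribute distance $0$, leaving $n-m$ residual pairs at distance $\le1$, whence by~(i) $\abs{F(\xi)-F(\zeta)}\le(n-m)/l\le(n-m)/n$. If $n>l$, then $F(\zeta)=\binom nl^{-1}\sum_S K(\zeta_S)$ over the $l$-subsets $S$ of the $n$ points of $\zeta$; writing $k_S$ for the number of points of $\xi$ contained in $S$, pair $\zeta_S$ with the padded configuration underlying $F(\xi)$ so that those $k_S$ points match at distance $0$ and the other $l-k_S$ coordinates pair at distance $\le1$, giving $\abs{K(\zeta_S)-F(\xi)}\le(l-k_S)/l$ by~(i). (When $\abs{\xi}\ge l$ a short coupling is needed at this point, matching each $S$ with a uniformly random $l$-superset of $S\cap\{1,\ldots,m\}$ drawn within $\{1,\ldots,m\}$, which is marginally uniform.) Averaging over $S$ and using $\sum_S k_S=m\binom{n-1}{l-1}$ then collapses the bound to exactly $(n-m)/n$.

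The step I expect to be the main obstacle is~(B) for $n>l$: the per-subset estimates leave no slack, so the alignments — and, when $\abs{\xi}\ge l$, the coupling between $l$-subsets of $\zeta$ and of $\xi$ — must be arranged so that the averages come out to precisely $(n-m)/n$. Conditions~(ii) and~(iii) are convenient bookkeeping aids here — (ii) in particular gives $K(\zeta_S)\le F(\xi)$ for every $S$ that contains all $\xi$-indices, disposing of the dominant subsets at once — although, as the sketch indicates, the crude bound $\dzero\le1$ on the residual pairs already suffices. The remaining points — well-definedness, the $[0,1]$-range of $F$, and the trivial cases $m=n$ and $m=0$ — are routine.
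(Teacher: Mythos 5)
Your argument is correct, but it follows a genuinely different route from the paper's. The paper keeps the comparison entirely at the level of equal-cardinality patterns: it pads $\xi$ up to $\abs{\eta}$ points either with the special points $\ubar_r$ from hypothesis~(ii) or with copies of $x_1$ via hypothesis~(iii), chosen according to the sign of $f(\xi)-f(\eta)$ so that $f$ moves monotonically \emph{away} from $f(\eta)$; then $\abs{f(\xi)-f(\eta)}\le\abs{f(\xibar)-f(\eta)}\le\done(\xibar,\eta)\le\donebar(\xi,\eta)$, the middle step being the known $\done$-Lipschitz property of such $U$-statistics (Proposition~2.A of Schuhmacher 2007b), and the extension below $l$ points is by padding with a fixed $x_0\in\mcx$. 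You instead split $\donebar(\xi,\eta)$ into its cardinality and positional parts through the intermediate pattern $\zeta=\xi+\sum_{i>m}\delta_{y_i}$, reuse the same classical $\done$-step for $(\zeta,\eta)$, and prove the cardinality step (B) by comparing each $l$-subset of $\zeta$ with an $l$-subset of $\xi$ containing its $\xi$-points; your assertion that the uniformly random superset coupling is marginally uniform is correct (a Vandermonde computation: $\sum_k\binom{l}{k}\binom{n-m}{l-k}\big/\binom{m-k}{l-k}=\binom{n}{l}\big/\binom{m}{l}$), and the average of $(l-k_S)/l$ is indeed exactly $(n-m)/n$ since $\EE k_S=ml/n$. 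What the two approaches buy: the paper's proof is short once the cited $\done$ result is granted, but it genuinely needs hypotheses~(ii) and~(iii) to make the padding step monotone; your proof pays for the padding step directly with the coupling bound and, as you observe, uses only hypothesis~(i) together with symmetry, the $[0,1]$ range and $\dzero\le1$ (conditions~(ii)--(iii) enter only through your convenient choice of padding points, where any fixed points of $\mcy$ would do), so it actually establishes a slightly stronger statement. The only places where you assert rather than argue — the marginal uniformity just mentioned and the exact collapse to $(n-m)/n$ — check out, so I see no gap.
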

\nin
One possible choice for the function $K$ in the above result is half the interpoint distance, i.e. $K(u_1,u_2) = \frac{1}{2} \dzero(u_1,u_2)$ for all $u_1,u_2 \in \mcx$. If $\mcx \subset \RD =: \mcy$ for some $D \in \NN$ and $\dzero(x,y) =  \abs{x-y} \wedge 1$ for all $x,y \in \RD$, we can consider more generally the diameter of the minimal bounding ball, defining
\begin{equation*}
   K(u_1,\ldots,u_l) := \frac{1}{l} \min \bigl\{ \diam_0(B); \, B \subset \RD \text{ closed Euclidean 
   ball with } u_1, \dots, u_l \in B \bigr\}
\end{equation*}
for $l \geq 2$ and $u_1, \ldots, u_l \in \RD$, where $\diam_0(B) := \sup \{ \dzero(x,y); x,y \in B \}$. It can be shown that this yields again a function $K$ that satisfies (i)--(iii).

The second proposition looks at the average nearest neighbor distance in a finite point pattern on $\RD$. This statistic gives important information about the amount of clustering in the pattern.
\begin{prop} \label{prop:nndist}
Let $\mcx \subset \RD$, and $\dzero(x,y) = \abs{x-y} \wedge 1$ for all $x,y \in \RD$. Define the function $f: \mfn_{2+} \to [0,1]$ by
\begin{equation*}
   f(\xi) := \frac{1}{m} \sum_{i=1}^m \min_{\substack{j \in \{1,\ldots,m\} \\ j \neq i}}
      \dzero(x_i,x_j)
\end{equation*}
for $\xi = \sum_{i=1}^m \delta_{x_i} \in \mfn$ with $m \geq 2$. Then there exists an extension $F$ of $f$ to the whole of $\mfn$ that is $\donebar$-Lipschitz continuous with constant~$\tau_D+1$, where $\tau_D$ denotes the kissing number in $D$ dimensions (i.e.\ the maximal number of unit balls that can touch a unit ball in $\RD$ without producing any overlaps of the interiors; see \cite{cs99}, Section~1.2, for details).
\end{prop}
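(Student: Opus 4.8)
The plan is to prove the Lipschitz estimate on the set $\mfn_{2+}$ where $f$ is already given, and then to let $F$ be the McShane-type extension
$$
   F(\eta) := 0 \vee \Bigl( 1 \wedge \inf_{\xi \in \mfn_{2+}} \bigl[ f(\xi) + (\tau_D+1)\donebar(\xi,\eta) \bigr] \Bigr), \qquad \eta \in \mfn .
$$
Since $f$ is $[0,1]$-valued and, once the estimate is established, $(\tau_D+1)$-Lipschitz on $\mfn_{2+}$, this $F$ agrees with $f$ on $\mfn_{2+}$, takes values in $[0,1]$, and is $\donebar$-Lipschitz with constant $\tau_D+1$ on all of $\mfn$; the configurations with at most one point then require no separate discussion. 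So it suffices to show $\bigabs{f(\xi)-f(\eta)} \leq (\tau_D+1)\donebar(\xi,\eta)$ for all $\xi,\eta \in \mfn_{2+}$.

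The main obstacle, and the only genuinely geometric ingredient, is the following lemma about nearest-neighbour selections: for every $\zeta = \sum_{l=1}^{N}\delta_{z_l} \in \mfn$ with $N \geq 2$ there exists a map $\kappa$ on $\{1,\dots,N\}$ with $\kappa(l)\neq l$ and $\dzero(z_l,z_{\kappa(l)}) = \min_{l'\neq l}\dzero(z_l,z_{l'})$ for all $l$, such that $\bigabs{\kappa^{-1}(\{l\})} \leq \tau_D$ for every $l$. The bound $\tau_D$ here is exactly the defining bound of the kissing number (see~\cite{cs99}), via the classical observation that if distinct, pairwise non-coincident points all have $z_j$ as a nearest neighbour, then the unit vectors pointing from $z_j$ to these points have pairwise angles at least $60^{\circ}$. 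The work lies in the tie-breaking: a point may have several nearest neighbours, and, because of the truncation of the metric, an ``isolated'' point $z_l$ --- one with $\dzero(z_l,z_{l'})=1$ for all $l'\neq l$ --- has \emph{every} other point as a nearest neighbour, so a careless choice could make a single index be hit arbitrarily often. I would deal with this by (i) grouping indices whose $z_l$ coincide and letting each group point cyclically within itself; (ii) letting the isolated indices point cyclically among themselves when there are at least two of them, and otherwise routing the single isolated index to a non-isolated index of currently minimal in-degree (which is at most $1<\tau_D$); and (iii) for the remaining genuine nearest-neighbour links applying the angle observation, distributing the pointers aimed at a multiply-occupied location over its copies. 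Checking that all of these choices can be made simultaneously so that every in-degree stays $\leq\tau_D$ (using $\tau_D\geq2$) is the fiddly part; the rest is bookkeeping with the triangle inequality for the truncated metric.

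Granting the lemma, fix $\xi=\sum_{i=1}^{m}\delta_{x_i}$ and $\eta=\sum_{j=1}^{n}\delta_{y_j}$ in $\mfn_{2+}$ with $m\leq n$, choose a $\donebar$-optimal permutation $\pi\in\Pi_n$, write $\eps_i := \dzero(x_i,y_{\pi(i)})$ for $i\leq m$ and let $J_0 := \{1,\dots,n\}\setminus\{\pi(1),\dots,\pi(m)\}$ be the $n-m$ ``unmatched'' indices, so that $n\donebar(\xi,\eta) = \sum_{i=1}^{m}\eps_i + (n-m)$; pick nearest-neighbour selections $k$ for $\xi$ and $\kappa$ for $\eta$ as in the lemma and abbreviate $r^{\xi}_i := \min_{i'\neq i}\dzero(x_i,x_{i'}) = \dzero(x_i,x_{k(i)})$, and $r^{\eta}_j$ analogously, so $mf(\xi)=\sum_i r^{\xi}_i$ and $nf(\eta)=\sum_j r^{\eta}_j$. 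For the inequality $f(\eta)-f(\xi)\leq(\tau_D+1)\donebar(\xi,\eta)$ the triangle inequality gives $r^{\eta}_{\pi(i)} \leq \dzero(y_{\pi(i)},y_{\pi(k(i))}) \leq \eps_i + r^{\xi}_i + \eps_{k(i)}$ for each $i\leq m$; summing over $i$, using that $k$ hits each index at most $\tau_D$ times, and bounding $\sum_{j\in J_0}r^{\eta}_j\leq n-m$ and $nf(\xi)\geq mf(\xi)$, one obtains $n(f(\eta)-f(\xi)) \leq (1+\tau_D)\bigl(\sum_i\eps_i + (n-m)\bigr) = (1+\tau_D)\,n\donebar(\xi,\eta)$. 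The reverse inequality is slightly more delicate because an unmatched point of $\eta$ may take over a matched point's nearest-neighbour role: setting $A := \{i\leq m : \kappa(\pi(i))\in J_0\}$ one has $\bigabs{A}\leq\tau_D(n-m)$, while for $i\notin A$ one has $\kappa(\pi(i))=\pi(i^{*})$ for a unique $i^{*}\neq i$ with $r^{\eta}_{\pi(i)} \geq r^{\xi}_i - \eps_i - \eps_{i^{*}}$ and $i\mapsto i^{*}$ again hitting each index at most $\tau_D$ times; summing over $i\notin A$, bounding $r^{\xi}_i\leq1$ on $A$, dropping the non-negative $J_0$-contribution, and using $nf(\xi)\leq mf(\xi)+(n-m)$, one arrives at $n(f(\xi)-f(\eta)) \leq (1+\tau_D)\bigl(\sum_i\eps_i + (n-m)\bigr)$. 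In both estimates the $\tau_D$ in the constant comes from the multiplicity bounds on the nearest-neighbour maps and the extra $+1$ from the self-terms $\eps_i$ together with the mass-difference term $n-m$.
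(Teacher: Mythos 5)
Your proposal is correct, but it takes a genuinely different route from the paper's. The paper extends $f$ by arbitrary constants on the zero- and one-point configurations (using that $\donebar(\xi,\eta)\geq\tfrac12$ when $|\xi|\leq 1<2\leq|\eta|$), and for $n\geq m\geq 2$ it pads $\xi$ with $n-m$ auxiliary points chosen monotonically -- pairwise distant points at distance $1$ from $\mcx$ if $f(\xi)\geq f(\eta)$, copies of $x_1$ if $f(\xi)<f(\eta)$ -- so that $|f(\xi)-f(\eta)|\leq|f(\xibar)-f(\eta)|$ and $\donebar(\xibar,\eta)\leq\donebar(\xi,\eta)$; the equal-cardinality bound $(\tau_D+1)\,\done(\xibar,\eta)$ is then simply quoted from Proposition~2.C of Schuhmacher (2007b), so the kissing-number geometry never appears explicitly. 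You instead prove everything directly: a McShane extension in place of the explicit one, a $\donebar$-optimal matching in place of the padding, and a self-proved nearest-neighbour selection lemma (each index hit at most $\tau_D$ times) in place of the citation. Your two summation arguments check out, including the set $A$ of matched indices whose nearest neighbour in $\eta$ is unmatched, the bound $|A|\leq\tau_D(n-m)$, and the inequalities $r^{\eta}_{\pi(i)}\leq\eps_i+r^{\xi}_i+\eps_{k(i)}$ and $r^{\xi}_i\leq\eps_i+r^{\eta}_{\pi(i)}+\eps_{i^*}$, and they make transparent where $\tau_D$ and the extra $+1$ come from. What your route buys is self-containedness and a native treatment of unequal cardinalities; what it costs is the tie-breaking lemma, which you only sketch. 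You do identify exactly the right obstructions (coincident points, truncation-isolated points, multiply occupied target locations), and the outline -- cyclic pointing within groups, the $60^{\circ}$ angle argument bounding external pointers at a location by $\tau_D$, distributing them over copies, and $\tau_D\geq 2$ -- does go through, but it would need to be written out in full for a complete proof, whereas the paper sidesteps it entirely by outsourcing the equal-cardinality case.
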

\begin{proof}[Proof of Proposition~\ref{prop:ftwopfunctions}]
Fix a point $x_0 \in \mcx$ and define $F(\xi') := f \bigl( \xi' + (l-\abs{\xi'})^{+} \delta_{x_0} \bigr)$ for every $\xi' \in \mfn$. It suffices to show that $\abs{f(\xi) - f(\eta)} \leq \donebar(\xi,\eta)$ for $\xi, \eta \in \mfn$ with $\abs{\xi}, \abs{\eta} \geq l$, because this implies that
\begin{equation*}
\begin{split}
   \bigabs{F(\xi') - F(\eta')} &= \bigabs{ f \bigl( \xi' + (l-\abs{\xi'})^{+} \delta_{x_0} \bigr) - f \bigl(
      \eta' + (l-\abs{\eta'})^{+} \delta_{x_0} \bigr) } \\
   &\leq \donebar \bigl( \xi' + (l-\abs{\xi'})^{+} \delta_{x_0}, \, \eta' + (l-\abs{\eta'})^{+} \delta_{x_0}
      \bigr) \\[0.5mm]
   &\leq \donebar (\xi', \eta') 
\end{split}
\end{equation*}
for every $\xi',\eta' \in \mfn$.
Let then $\xi=\sum_{i=1}^m \delta_{x_i}$ and $\eta=\sum_{i=1}^n \delta_{y_i}$, where $m, n \geq l$ and without loss of generality $m \leq n$ (because of the symmetry of the inequality that we would like to show). We add $n-m$ points $x_{m+1}, \dots, x_n$ to $\xi$ in one of the following two ways depending on whether $f(\xi) \geq f(\eta)$ or $f(\xi) < f(\eta)$, and call the result $\xibar := \sum_{i=1}^n \delta_{x_i}$.

If $f(\xi) \geq f(\eta)$, let $x_{m+r} := \bar{u}_r$, $1 \leq r \leq n-m$, for points $\bar{u}_1, \dots, \bar{u}_{n-m}$ chosen as in assumption (ii) with $N = n-m$. It follows that
\begin{equation} \label{eq:increaseU}
\begin{split}
   f(\xibar) &= \frac{1}{{n \choose l}} \sum_{1 \leq i_1 < \ldots < i_l \leq n} 
      K(x_{i_1}, \ldots, x_{i_l}) \\
   &= \frac{1}{\sum_{j=0}^l {m \choose j}{n-m \choose l-j}} \: \sum_{j=0}^l \sum_{\substack{1 \leq 
      i_1 < \ldots < i_j \leq m \\[1.5pt] m+1 \leq i_{j+1} < \ldots < i_l \leq n}} K(x_{i_1}, \ldots, 
      x_{i_l}) \\
   &\geq \frac{1}{{m \choose l}} \sum_{1 \leq i_1 < \ldots < i_l \leq m} 
      K(x_{i_1}, \ldots, x_{i_l}) = f(\xi).
\end{split}
\end{equation}
The inequality is a consequence of the fact that $\bigl( \sum_{j=0}^l a_j \bigr) \big/ \bigl( \sum_{j=0}^l b_j \bigr) \geq a_l/b_l$ if $a_j/b_j \geq a_l/b_l$ for every $j$; 
and the latter condition holds because for $\max(0,l-n+m) \leq j \leq l-1$ (since $a_j=b_j=0$ if $j < l-n+m$, these pairs can be ignored altogether),
\begin{equation*}
\begin{split}
   &\frac{1}{{m \choose j}{n-m \choose l-j}} \sum_{\substack{1 \leq 
      i_1 < \ldots < i_j \leq m \\[1.5pt] m+1 \leq i_{j+1} < \ldots < i_l \leq n}} K(x_{i_1}, \ldots, 
      x_{i_l}) \\
   &\hspace*{7mm} \geq \frac{1}{{m \choose j}{n-m \choose l-j}} \sum_{\substack{1 \leq 
      i_1 < \ldots < i_j \leq m \\[1.5pt] m+1 \leq i_{j+1} < \ldots < i_l \leq n}} \frac{1}{{m-j \choose
      l-j}} \sum_{\substack{1 \leq r_{j+1} < \ldots < r_l \leq m \\ \{r_{j+1},\ldots,r_l\} \cap 
      \{i_1,\ldots,i_j\} = \emptyset}} K(x_{i_1}, \ldots, x_{i_j}, x_{r_{j+1}}, \ldots, x_{r_l}) \\
   &\hspace*{7mm} = \frac{1}{{m \choose j}} \frac{1}{{m-j \choose l-j}} \sum_{1 \leq 
      i_1 < \ldots < i_j \leq m} \; \sum_{\substack{1 \leq r_{j+1} < \ldots < r_l \leq m \\ 
      \{r_{j+1},\ldots,r_l\} \cap 
      \{i_1,\ldots,i_j\} = \emptyset}} K(x_{i_1}, \ldots, x_{i_j}, x_{r_{j+1}}, \ldots, x_{r_l}) \\[-1.5mm]
   &\hspace*{7mm} = \frac{{l \choose j}}{{m \choose j}{m-j \choose l-j}} \sum_{1 \leq i_1 < \ldots < i_l 
      \leq m} K(x_{i_1}, \ldots, x_{i_l}) \\[1mm]
   &\hspace*{7mm} = \frac{1}{{m \choose l}} \sum_{1 \leq i_1 < \ldots < i_l \leq m}
      K(x_{i_1}, \ldots, x_{i_l}),
\end{split}   
\end{equation*}
where the inequality follows by assumption (ii) and the symmetry of $K$. 

If on the other hand $f(\xi) < f(\eta)$, let $x_{m+r} := x_1$, $1 \leq r \leq n-m$. It follows in exactly the same way as for the first case, only this time with ``$\geq$'' replaced by ``$\leq$'' and using assumption~(iii) instead of assumption~(ii), that $f(\xibar) \leq f(\xi)$.

In total, we thus obtain
\begin{equation*}
   \bigabs{f(\xi) - f(\eta)} \leq \bigabs{f(\xibar) - f(\eta)} \leq \done(\xibar,\eta) = 
   \donebar(\xibar,\eta) \leq \donebar(\xi,\eta),
\end{equation*}
where the second inequality follows from the $\done$-Lipschitz continuity of the functions considered in Proposition~2.A of \cite{schumi5}.
\end{proof}

\begin{proof}[Proof of Proposition~\ref{prop:nndist}]
Fix arbitrary $\alpha_0, \alpha_1 \in [0,1]$ and define $F(\xi) := \alpha_i$ if $\abs{\xi} = i \in \{0,1\}$ and $F(\xi) = f(\xi)$ otherwise. Let $\xi = \sum_{i=1}^m \delta_{x_i}$ and $\eta = \sum_{i=1}^n \delta_{y_i}$, where without loss of generality we assume $m \leq n$. Since $\bigabs{F(\xi) - F(\eta)} \leq 1 \leq (\tau_D+1) \frac12 \leq (\tau_D+1) \donebar(\xi,\eta)$ if $m \in \{0,1\}$ and $n>m$, the Lipschitz inequality remains to be shown for $n \geq m \geq 2$ only.

As before, we bring the cardinalities to the same level. Let $\xibar := \sum_{i=1}^n \delta_{x_i}$, where the points $x_{m+1}, \ldots, x_n$ are chosen in the following way. If $f(\xi) \geq f(\eta)$, let $x_{m+1}, \ldots, x_n$ be arbitrary pairwise distinct points in $\RD$ that are at $\dzero$-distance $1$ from each other and from $\mcx$. Hence $f(\xibar) \geq f(\xi)$ because for each of the added points the distance to its nearest neighbor is one, which is maximal.
If on the other hand $f(\xi) < f(\eta)$, let $x_{m+1} := \ldots := x_n := x_1$, whence it is immediately clear that $f(\xibar) \leq f(\xi)$ because for each of the added points the distance to its nearest neighbor is zero. 

In total, we obtain
\begin{equation*}
   \bigabs{f(\xi) - f(\eta)} \leq \bigabs{f(\xibar) - f(\eta)} \leq (\tau_D+1) \done(\xibar,\eta) = 
   (\tau_D+1) \donebar(\xibar,\eta) \leq (\tau_D+1) \donebar(\xi,\eta),
\end{equation*}
where the second inequality follows from the $\done$-Lipschitz continuity of the average nearest neighbor distance considered in Proposition~2.C of \cite{schumi5}.
\end{proof}

\section{Distance estimates in $\bs{\dtwobar}$} \label{sec:dests}

In this section we present upper bounds for some essential $\dtwobar$-distances, which all clearly improve on the bounds that are available for the corresponding $\dtwo$-distances. However, the improvement in general results is not always as much as one would hope it to be, and it seems that considerably better bounds can be obtained by a more specialized treatment (see for example Subsection~\ref{ssec:BePo}). 

\subsection{Poisson process approximation of a general point process} \label{ssec:genPo}

Using the fact that
\begin{equation}
\mca h(\xi)=\int_\mcx[h(\xi+\delta_\alpha)-h(\xi)]\;\bl(d\alpha)+\int_\mcx[h(\xi-\delta_\alpha)-h(\xi)]\;\xi(d\alpha),\ \xi\in\mfn,
\label{generator1}
\end{equation}
is the generator of the spatial immigration-death process whose steady state distribution is the Poisson process with expectation measure $\bl$, \citet{bb92} establish the Stein identity for Poisson process approximation as
\begin{equation}
\mca h(\xi)=f(\xi)-\poisson(\bl)(f)
\label{Steinidentity1}
\end{equation}
for suitable test functions $f$ on $\mfn$. The solution for (\ref{Steinidentity1}) is given by
\begin{equation}
h_f(\xi)=-\int_0^\infty[\mean f(\ZZ_\xi(t))-\poisson(\bl)(f)]\;dt,
\label{solution1}
\end{equation}
where $\ZZ_\xi$ is an immigration-death process with generator $\mca$ and initial point pattern $\ZZ_\xi(0)=\xi$. 

Using \Ref{Steinidentity1} and different characteristics of point processes, we can establish various Poisson process approximation error bounds (see \citet{bb92}, \citet{bb92comp}, \citet{bbx98}, and \citet{chenxia04}).
To keep our text concise, we present here a slightly simplified version of the main result in \citet{chenxia04} only; it is an obvious exercise to apply our estimates \Ref{bounddelta1} and \Ref{bounddelta2} to get parallel results in the other articles mentioned above.

We assume that, for each $\alpha\in\mcx$, there is 
a Borel set $A_\alpha\subset\mcx$ such that $\alpha\in A_\alpha$ and
the mapping 
$$\mcx\times\mfn\rightarrow\mcx\times\mfn: (\alpha,\xi)\mapsto(\alpha,\xi|_{A_\alpha^c})
$$
is product measurable, where $\xi|_{A_\alpha^c}$ stands for the point pattern of $\xi$ restricted to $A_\alpha^c$ (\citealp{kallenberg86}, Section~1.1).
Such requirement can be ensured by $A=\{(x,y);\, y\in A_x,x\in\mcx\}$ measurable in $\mcx^2$
 (see \citealp{chenxia04}). We define, for any function $h$ on $\mfn$, that
\begin{equation*}
\begin{split}
\Delta h(\xi)&:=\sup_{\alpha\in\mcx}|h(\xi+\delta_\alpha)-h(\xi)|,\\
\Delta^2h(\xi)&:=\sup_{\eta-\xi\in\mfn,\hspace*{1.5pt}\alpha,\beta\in\mcx}|h(\eta+\delta_\alpha+\delta_\beta)-h(\eta+\delta_\alpha)-h(\eta+\delta_\beta)+h(\eta)|,\ \xi\in\mfn.
\end{split}
\end{equation*}
 
\begin{thm}[\citealp{chenxia04}]\label{chenxia04thm}
For each bounded measurable function $f:\mfn\rightarrow\RR_+$, let $h_f$ be the solution \Ref{solution1} of Equation~\Ref{Steinidentity1}. If $\Xi$ is a point process on $\mcx$ with expectation measure $\bl$, then
\begin{eqnarray*}
&&|\mean f(\Xi)-\poisson(\bl)(f)|\\[1mm]
&&\le\mean\int_{{\mcx}}
\Delta^2h_f(\Xi|_{A_\alpha^c})(\Xi(A_\alpha)-1)\; \Xi(d\alpha)\\
&&\ \ \ +\mean\int_{{\mcx}}\left|[h_f(\Xi|_{A_\alpha^c})-h_f(\Xi|_{A_\alpha^c}+\delta_\alpha)]
-[h_f(\Xi_\alpha|_{A_\alpha^c})-h_f(\Xi_\alpha|_{A_\alpha^c}+\delta_\alpha)]\right|\; \bl(d\alpha)\\
&&\ \ \ +\mean\int_{{\mcx}} 
        \Delta^2h_f(\Xi|_{A_\alpha^c})\Xi(A_\alpha)\; \bl(d\alpha),
\end{eqnarray*}
where $\Xi_\alpha$ is the Palm process of $\Xi$
at location $\alpha\in\mcx$ (\citealp{kallenberg86}, Chapter~10).
\end{thm}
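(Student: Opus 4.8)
The plan is to run Stein's method for Poisson process approximation in its ``local'' formulation, starting from the Stein identity~\eqref{Steinidentity1} together with the explicit form~\eqref{generator1} of the generator $\mca$. Evaluating~\eqref{Steinidentity1} at $\Xi$, taking expectations, and using that the Stein solution $h_f$ is bounded when $f$ is bounded (so that all integrals below are finite, $\bl$ being a finite measure) gives
\begin{equation*}
   \mean f(\Xi)-\poisson(\bl)(f)=\mean\int_\mcx[h_f(\Xi+\delta_\alpha)-h_f(\Xi)]\;\bl(d\alpha)-\mean\int_\mcx[h_f(\Xi)-h_f(\Xi-\delta_\alpha)]\;\Xi(d\alpha).
\end{equation*}
Writing $\Delta_\alpha h(\xi):=h(\xi+\delta_\alpha)-h(\xi)$ and applying the Campbell--Mecke (refined Campbell) formula to the second integral, which converts $\mean\int g(\alpha,\Xi)\,\Xi(d\alpha)$ into $\int\mean g(\alpha,\Xi_\alpha)\,\bl(d\alpha)$ for the Palm process $\Xi_\alpha$ at $\alpha$, I would obtain
\begin{equation*}
   \mean f(\Xi)-\poisson(\bl)(f)=\int_\mcx\Bigl(\mean\,\Delta_\alpha h_f(\Xi)-\mean\,\Delta_\alpha h_f(\Xi_\alpha-\delta_\alpha)\Bigr)\;\bl(d\alpha),
\end{equation*}
where $\Xi$ and the family $(\Xi_\alpha)_{\alpha\in\mcx}$ are realised on a common probability space.

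Next I would localise around each $\alpha$ by inserting the restrictions to $A_\alpha^c$, writing the integrand above as the sum of
\begin{align*}
   \text{(I)}\ &:=\ \mean\bigl[\Delta_\alpha h_f(\Xi)-\Delta_\alpha h_f(\Xi|_{A_\alpha^c})\bigr],\\
   \text{(II)}\ &:=\ \mean\,\Delta_\alpha h_f(\Xi|_{A_\alpha^c})-\mean\,\Delta_\alpha h_f\bigl((\Xi_\alpha)|_{A_\alpha^c}\bigr),\\
   \text{(III)}\ &:=\ \mean\bigl[\Delta_\alpha h_f\bigl((\Xi_\alpha)|_{A_\alpha^c}\bigr)-\Delta_\alpha h_f(\Xi_\alpha-\delta_\alpha)\bigr],
\end{align*}
where in (III) I use $\alpha\in A_\alpha$, so that $(\Xi_\alpha-\delta_\alpha)|_{A_\alpha^c}=(\Xi_\alpha)|_{A_\alpha^c}$. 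Term (II) is, after the harmless sign change inside the absolute value, exactly the second term of the claimed bound. For (I) I would pass from $\Xi|_{A_\alpha^c}$ up to $\Xi$ by adjoining the $\Xi(A_\alpha)$ points of $\Xi$ in $A_\alpha$ one at a time; each step alters $\Delta_\alpha h_f$ by a second difference of $h_f$ evaluated at an intermediate configuration dominating $\Xi|_{A_\alpha^c}$, which --- since $\xi\mapsto\Delta^2 h(\xi)$ is antitone in $\xi$ (it is a supremum over configurations dominating $\xi$) --- is at most $\Delta^2 h_f(\Xi|_{A_\alpha^c})$. Hence $|\text{(I)}|\le\mean[\Xi(A_\alpha)\,\Delta^2 h_f(\Xi|_{A_\alpha^c})]$, and integrating over $\bl$ yields the third term of the bound. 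Symmetrically, deleting the $\Xi_\alpha(A_\alpha)-1$ points of $\Xi_\alpha-\delta_\alpha$ in $A_\alpha$ one at a time gives $|\text{(III)}|\le\mean[(\Xi_\alpha(A_\alpha)-1)\,\Delta^2 h_f((\Xi_\alpha)|_{A_\alpha^c})]$; integrating over $\bl$ and then running Campbell--Mecke backwards rewrites this as $\mean\int_\mcx(\Xi(A_\alpha)-1)\,\Delta^2 h_f(\Xi|_{A_\alpha^c})\,\Xi(d\alpha)$, the first term. A triangle inequality over (I)--(III) and the $\bl$-integral finishes the proof.

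The genuinely routine parts are the two telescoping second-difference estimates and the bookkeeping of the three terms. The points that need care are: (a) measurability and integrability --- here the boundedness of $h_f$ (hence finiteness of $\Delta h_f$ and $\Delta^2 h_f$) and the assumed product-measurability of $(\alpha,\xi)\mapsto(\alpha,\xi|_{A_\alpha^c})$ enter, the latter being precisely what makes $\Xi|_{A_\alpha^c}$ and $(\Xi_\alpha)|_{A_\alpha^c}$ bona fide random point measures jointly in $\alpha$; and (b) the Palm-calculus step, i.e.\ the correct statement of the Campbell--Mecke formula and the construction of a coupling of $\Xi$ with the Palm processes $(\Xi_\alpha)_{\alpha\in\mcx}$ under which term (II) can legitimately be written as the single expectation $\mean\int_\mcx|\cdots|\,\bl(d\alpha)$ appearing in the statement. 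I expect step (b) --- handling the Palm process and its coupling with $\Xi$ rigorously --- to be the main obstacle; once that is in place, the rest is the standard local Stein's-method computation.
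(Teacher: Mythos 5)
The paper itself gives no proof of Theorem~\ref{chenxia04thm}: it is quoted from \citet{chenxia04}, and your reconstruction is essentially the argument of that reference --- take expectations in the Stein identity \eqref{Steinidentity1}, turn the death term into a $\bl$-integral over the reduced Palm processes $\Xi_\alpha-\delta_\alpha$ via the Campbell--Mecke formula, split the integrand into your three localized pieces, bound (I) and (III) by telescoping over the points in $A_\alpha$ using that $\Delta^2 h_f$ is a supremum over dominating configurations, and undo the Palm transformation for (III). Your outline is correct; the only imprecision is the claim that $h_f$ is bounded, which is not needed and not generally true: what the argument uses is that the first differences of $h_f$ are bounded (immediate from \eqref{solution1} for bounded $f$) together with $\bl(\mcx)=\EE\abs{\Xi}<\infty$, which justifies the integrability/Fubini steps, and, as you correctly flag, the second term of the bound is to be read with respect to an (arbitrary) coupling of $\Xi$ and the Palm processes $\Xi_\alpha$ on a common probability space.
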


\nin
The error bounds for Poisson process approximation like Theorem~\ref{chenxia04thm} (see \citet{bb92}, \citet{bb92comp}, \citet{bbx98}, and \citet{chenxia04} for full details) pivot on the estimates of $\Delta h_f$ and $\Delta^2h_f$. The following proposition summarizes these estimates for $\dtwobar$.
 
\begin{prop} Let
\begin{equation*}
\begin{split}
\Delta h(\xi;\alpha)&=h(\xi+\delta_\alpha)-h(\xi),\\
\Delta^2h(\xi;\alpha,\beta)&=h(\xi+\delta_\alpha+\delta_\beta)-h(\xi+\delta_\alpha)-h(\xi+\delta_\beta)+h(\xi),\ \xi\in\mfn,\ \alpha,\beta\in\mcx;
\end{split}
\end{equation*}
then for each $\donebar$-Lipschitz function $f$, we have
\begin{eqnarray}
|\Delta h_f(\xi;\alpha)|&\arrayleq& \min\left\{1,\frac{0.95+\ln^+\lambda}{\lambda},\frac{1-e^{-|\xi|\wedge\lambda}}{|\xi|\wedge\lambda}\right\},
	\label{bounddelta1}\\
|\Delta^2h_f(\xi;\alpha,\beta)|&\arrayleq& \min\left\{0.75,\frac{1}{|\xi|\wedge\lambda},\frac{1.09}{|\xi|+1}+\frac{1}{\lambda},\frac{2\ln\lambda}{\lambda}{\bf 1}_{\{\lambda\ge1.76\}}+0.75\,{\bf 1}_{\{\lambda<1.76\}}\right\},
	\label{bounddelta2}
\end{eqnarray}
where $\frac{1-e^{0}}{0}:=1$ and $\lambda=\bl(\mcx)$.
\end{prop}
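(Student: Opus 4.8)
The engine of the proof is the explicit Stein solution~\Ref{solution1}, which gives
$\Delta h_f(\xi;\alpha)=-\int_0^\infty\bigl[\mean f(\ZZ_{\xi+\delta_\alpha}(t))-\mean f(\ZZ_\xi(t))\bigr]\,dt$
and an analogous four-term expression for $\Delta^2h_f$. The plan is: (a) couple the relevant immigration--death processes so that these (double) differences become expectations of $\donebar$-increments of $f$ along a \emph{single} trajectory; (b) bound those increments by the elementary identities $\donebar(\zeta+\delta_\alpha,\zeta)=1/(|\zeta|+1)$ and $\donebar(\zeta+\delta_\alpha+\delta_\beta,\zeta+\delta_\alpha)=1/(|\zeta|+2)$ (this is precisely where $\donebar$ buys something that $\done$ does not, since $\done$ would only give the constant $1$); and (c) evaluate the resulting time integrals using the classical law of $|\ZZ_\xi(t)|$.

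For (a) I would run $\ZZ_\xi,\ZZ_{\xi+\delta_\alpha},\ZZ_{\xi+\delta_\beta},\ZZ_{\xi+\delta_\alpha+\delta_\beta}$ on one probability space: a common immigration stream (rate $\lambda$ on $\mcx$, spatial law $\lambda^{-1}\bl$), a common family of i.i.d.\ rate-$1$ exponential lifetimes for the $|\xi|$ points of $\xi$, and independent rate-$1$ exponential lifetimes $\sigma,\tau$ for the extra points at $\alpha,\beta$. Since the death rate in~\Ref{generator1} is one per point and immigration is state-independent, the extra points do not perturb the rest, so with $\ZZ:=\ZZ_\xi$ we get $\ZZ_{\xi+\delta_\alpha}(t)=\ZZ(t)+\indicator{\sigma>t}\delta_\alpha$, etc.; hence the $\Delta h_f$-integrand is supported on $\{\sigma>t\}$ and the $\Delta^2h_f$-integrand on $\{\sigma\wedge\tau>t\}$ (it is identically zero once \emph{either} extra point has died). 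Using that the lifetimes are independent of $\ZZ$ with $\prob(\sigma>t)=e^{-t}$, $\prob(\sigma\wedge\tau>t)=e^{-2t}$, and the $\donebar$-Lipschitz bounds $|\Delta f(\zeta;\alpha)|\le1\wedge\frac1{|\zeta|+1}$ and $|\Delta^2f(\zeta;\alpha,\beta)|\le\frac32\wedge\bigl(\frac1{|\zeta|+1}+\frac1{|\zeta|+2}\bigr)$ from step (b), I obtain
$|\Delta h_f(\xi;\alpha)|\le\int_0^\infty\mean\bigl[1\wedge\tfrac1{N_t+1}\bigr]e^{-t}\,dt$
and
$|\Delta^2h_f(\xi;\alpha,\beta)|\le\int_0^\infty\mean\bigl[\tfrac32\wedge(\tfrac1{N_t+1}+\tfrac1{N_t+2})\bigr]e^{-2t}\,dt$,
where $N_t:=|\ZZ_\xi(t)|$.

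For (c) I would use the classical fact that the cardinality process is an $M/M/\infty$ queue, so $N_t\eqinlaw\binomial(|\xi|,e^{-t})+\poisson(\lambda(1-e^{-t}))$ with independent summands; in particular $N_t\ge B_t$ and $N_t\ge P_t$ pointwise for $B_t\sim\binomial(|\xi|,e^{-t})$, $P_t\sim\poisson(\lambda(1-e^{-t}))$, so $\frac1{N_t+1}$ is dominated by every convex combination of $\frac1{B_t+1}$ and $\frac1{P_t+1}$. With the moment identities $\mean[(P+1)^{-1}]=(1-e^{-\mu})/\mu$, $\mean[(B+1)^{-1}]=(1-(1-p)^{n+1})/((n+1)p)$ and their $(\,\cdot\,+2)^{-1}$ analogues, the substitution $u=1-e^{-t}$ (so $e^{-t}\,dt=du$, $e^{-2t}\,dt=(1-u)\,du$) turns each bound into an explicit integral over $[0,1]$. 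For~\Ref{bounddelta1}: the bound $1$ is trivial; $\frac1{N_t+1}\le\frac1{P_t+1}$ gives $\frac1\lambda\int_0^\lambda\frac{1-e^{-v}}{v}\,dv=\frac1\lambda(\gamma+\ln\lambda+E_1(\lambda))\le\frac{0.95+\ln^+\lambda}{\lambda}$ (as $E_1$ is decreasing and $\gamma+E_1(1)<0.95$); and $\frac1{N_t+1}\le\frac1{B_t+1}$, treating $|\xi|\le\lambda$ and $|\xi|>\lambda$ separately, gives $\frac{1-e^{-|\xi|\wedge\lambda}}{|\xi|\wedge\lambda}$. For~\Ref{bounddelta2}: the constant $0.75=\frac32\cdot\frac12$ is immediate; choosing in each regime the most favourable of the trivial, binomial-based, Poisson-based, or convex-combination estimates of $\frac1{N_t+1}+\frac1{N_t+2}$, together with the identity $\int_0^\infty\mean[(B_t+1)^{-1}]e^{-2t}\,dt=(|\xi|+2)^{-1}$, yields $\frac1{|\xi|\wedge\lambda}$, $\frac{1.09}{|\xi|+1}+\frac1\lambda$, and (for $\lambda\ge1.76$) $\frac{2\ln\lambda}{\lambda}$, the numerical constants being the output of the corresponding one-dimensional integral estimates.

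The bulk of this — the substitution and the moment identities — is routine bookkeeping. The real obstacle is pinning down the sharp constants in the second-difference bounds~\Ref{bounddelta2}: the estimate $|\Delta^2f(\zeta;\alpha,\beta)|\le\frac1{|\zeta|+1}+\frac1{|\zeta|+2}$ is essentially the best available for a general $\donebar$-Lipschitz $f$, and feeding it into $\int_0^\infty\mean[\,\cdot\,]e^{-2t}\,dt$ through a single stochastic lower bound on $N_t$ loses too much (it introduces a parasitic $\ln\lambda$), so one must carry both the surviving-initial-points part and the immigrant part of $N_t$ through the integration and optimise the split; it is there that the constants $1.09$, $1.76$ and the coefficient $2$ are determined. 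For the purely $\lambda$-dependent pieces one may alternatively quote the Stein-factor estimates of \citet{chenxia04} and \citet{bb92} rather than reproving them.
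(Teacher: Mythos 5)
Your steps (a) and (b) coincide with the paper's proof: the same exponential-lifetime coupling reduces $\Delta h_f$ and $\Delta^2 h_f$ to $\int_0^\infty e^{-t}\EE[\cdots]\,dt$ and $\int_0^\infty e^{-2t}\EE[\cdots]\,dt$, the $\donebar$-increments $\frac{1}{|\zeta|+1}$ and $\frac{1}{|\zeta|+1}+\frac{1}{|\zeta|+2}$ are exactly the ones used there, and the decomposition $|\ZZ_\xi(t)| \eqinlaw \binomial(|\xi|,e^{-t}) + \poisson(\lambda(1-e^{-t}))$, the substitution $s=1-e^{-t}$, and the Abramowitz--Stegun estimates for the logarithmic bounds are all as in the paper. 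The gap is in step (c), specifically in how you extract the $|\xi|\wedge\lambda$-dependent bounds. Your concrete recipe --- dominate $\frac{1}{N_t+1}$ by $\frac{1}{B_t+1}$, by $\frac{1}{P_t+1}$, or by convex combinations of these --- provably cannot reach them. Using the binomial part alone, $\int_0^\infty e^{-t}\,\EE\bigl[(B_t+1)^{-1}\bigr]dt = \frac{1}{n+1}\sum_{k=0}^{n}\frac{1}{k+1} \asymp \frac{\ln n}{n}$, not $\frac{1-e^{-n}}{n}$, so the third bound in \Ref{bounddelta1} does not follow this way; and since the best a ($t$-dependent) convex combination can give is $\min\bigl(\EE[(B_t+1)^{-1}],\EE[(P_t+1)^{-1}]\bigr)$, which for $|\xi|=\lambda=n$ large and $e^{-t}\approx\tfrac12$ is about $\tfrac{2}{n}$, integration yields at best roughly $\tfrac{2\ln 2}{n}$ --- above the claimed $\frac{1-e^{-n\wedge\lambda}}{n\wedge\lambda}$ and likewise too weak for $\frac{1}{|\xi|\wedge\lambda}$ and hence for $\frac{1.09}{|\xi|+1}+\frac{1}{\lambda}$ in \Ref{bounddelta2}.

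The missing device, which your closing remark about ``carrying both parts through the integration'' gestures at but never supplies, is a bound that is uniform in $t$ and uses both components \emph{simultaneously} through the generating function: with $n=|\xi|$,
\begin{equation*}
   \EE\frac{1}{Z_\xi(t)+1}=\int_0^1 \EE\, x^{Z_\xi(t)}\,dx
   =\int_0^1\bigl[1-e^{-t}(1-x)\bigr]^{n}e^{-\lambda_t(1-x)}\,dx
   \le\int_0^1 e^{-(ne^{-t}+\lambda_t)(1-x)}\,dx
   \le\frac{1-e^{-n\wedge\lambda}}{n\wedge\lambda},
\end{equation*}
because the effective intensity $ne^{-t}+\lambda(1-e^{-t})$ is, at every time $t$, a convex combination of $n$ and $\lambda$ and hence at least $n\wedge\lambda$; the analogous computation with $\int_0^1 x\,\EE\,x^{Z_\xi(t)}dx$ bounds $\EE[(Z_\xi(t)+2)^{-1}]$. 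It is this pair of uniform-in-$t$ estimates (not any stochastic comparison with $B_t$ or $P_t$ separately) that the paper feeds into the time integrals to get $\frac{1-e^{-|\xi|\wedge\lambda}}{|\xi|\wedge\lambda}$, $\frac{1}{|\xi|\wedge\lambda}$, and, after a finite numerical check over $1\le n\le 11$, the constant $1.09$; your outline therefore needs this inequality inserted at step (c) before the remaining bookkeeping (including the $\lambda\ge 1.76$ monotonicity argument, which you otherwise set up correctly via the Poisson comparison) goes through.
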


\begin{proof} For convenience, we write $|\xi|=n$ and $|\ZZ_\xi(t)|=Z_\xi(t)$. Let $\tau_1$ and $\tau_2$ be independent exponential random variables with mean $1$ which are also independent of $\ZZ_\xi$; then one can write
\begin{equation*}
\begin{split}
   &\ZZ_{\xi+\delta_\alpha}(t)=\ZZ_\xi(t)+\delta_\alpha {\bf 1}_{\{\tau_1>t\}},\ \ZZ_{\xi+\delta_\beta}(t)=
      \ZZ_\xi(t)+\delta_\beta {\bf 1}_{\{\tau_2>t\}},\\
   &\hspace*{2.25em}\text{and } \, \ZZ_{\xi+\delta_\alpha+\delta_\beta}(t)=\ZZ_\xi(t)+\delta_\alpha
      {\bf 1}_{\{\tau_1>t\}}+\delta_\beta {\bf 1}_{\{\tau_2>t\}}.
\end{split}
\end{equation*}
Hence it follows from \Ref{solution1} and the $\donebar$-Lipschitz property of $f$ that
\begin{eqnarray}
\left|\Delta h_f(\xi;\alpha)\right|
&\arrayeq&\left|\int_0^\infty e^{-t}\mean[f(\ZZ_\xi(t)+\delta_\alpha)
	-f(\ZZ_\xi(t))]\;dt\right|\label{delta1proof1}\\
&\arrayleq&\int_0^\infty e^{-t}\mean\frac{1}{Z_\xi(t)+1}\;dt\label{delta1proof2}\\
&\arrayleq&\int_0^\infty e^{-t}\;dt=1.\nonumber
\end{eqnarray}
Also,
\begin{eqnarray}
&&\hspace*{-12mm}\left|\Delta^2h_f(\xi;\alpha,\beta)\right|\nonumber\\
&\arrayeq&\left|\int_0^\infty e^{-2t}\mean[f(\ZZ_\xi(t)+\delta_\alpha+\delta_\beta)-f(\ZZ_\xi(t)+\delta_\alpha)
	-f(\ZZ_\xi(t)+\delta_\beta)+f(\ZZ_\xi(t))]\;dt\right|\label{delta2proof1}\\
&\arrayleq&\int_0^\infty e^{-2t}\mean\left[\frac{1}{Z_\xi(t)+2}+\frac{1}{Z_\xi(t)+1}\right]\;dt\label{delta2proof2}\\
&\arrayleq&1.5\int_0^\infty e^{-2t}dt=0.75.\label{delta2proof3}
\end{eqnarray}
However, since $\ZZ_\xi$ has constant immigration rate $\bl$ and unit per capita death rate, it is possible to write
$$\ZZ_\xi(t)=\ZZ_\emptyset(t)+\DD_\xi(t),$$
where $\DD_\xi$ is a pure death process with unit per capita death rate independent of $\ZZ_\emptyset$. Direct verification gives that $Z_\emptyset(t)$ follows the Poisson distribution with mean $\lambda_t:=\lambda(1-e^{-t})$, while $|\DD_\xi(t)|$ follows $\binomial(|\xi|,e^{-t})$. Hence
\begin{equation}
\mean\frac{1}{Z_\xi(t)+1}\le\mean\frac{1}{Z_\emptyset(t)+1}=
\frac{1-e^{-\lambda_t}}{\lambda_t},\label{delta1proof4}
\end{equation}
\begin{eqnarray}
\mean\frac{1}{Z_\xi(t)+1}&\arrayeq&\int_0^1\mean x^{Z_\xi(t)}\;dx=\int_0^1[1-e^{-t}(1-x)]^ne^{-\lambda_t(1-x)}\;dx\nonumber\\
&\arrayleq& \int_0^1e^{-(ne^{-t}+\lambda_t)(1-x)}\;dx\le \int_0^1e^{-(n\wedge\lambda)(1-x)}\;dx=\frac{1-e^{-n\wedge\lambda}}{n\wedge\lambda},\label{delta2proof5}
\end{eqnarray}
and similarly,
\begin{eqnarray}
\mean\frac{1}{Z_\xi(t)+2}&\arrayeq&\int_0^1\mean x^{Z_\xi(t)+1}\;dx=\int_0^1x[1-e^{-t}(1-x)]^ne^{-\lambda_t(1-x)}\;dx\nonumber\\
&\arrayleq& \int_0^1xe^{-(n\wedge\lambda)(1-x)}\;dx=\frac{1}{n\wedge \lambda}
	-\frac{1}{(n\wedge\lambda)^2}(1-e^{-n\wedge\lambda}).\label{delta2proof6}
\end{eqnarray}
The claim
\begin{equation}
\left|\Delta h_f(\xi;\alpha)\right|\le \frac{0.95+\ln^+\lambda}{\lambda}
\label{delta1proof5}
\end{equation}
is obvious for $\lambda<0.95$ as the right hand side is already greater than 1, so it remains to show \Ref{delta1proof5} for $\lambda\ge 0.95$.
Combining \Ref{delta1proof2} and \Ref{delta1proof4}, with $s=1-e^{-t}$, we get 
\begin{eqnarray*}
\left|\Delta h_f(\xi;\alpha)\right|&\arrayleq&\int_0^\infty e^{-t}\frac{1-e^{-\lambda_t}}{\lambda_t}\;dt=\int_0^1\frac{1-e^{-\lambda s}}{\lambda s}\;ds\le\frac{1}{\lambda}\left(\frac{e^{-\lambda}}{\lambda}+\ln\lambda+\gamma\right),
\end{eqnarray*}
where $\gamma$ is the Euler constant and the last inequality is due to items 5.1.39 and 5.1.19 of \citet{abrastegun72}. For $0.95\le \lambda \le 1$, $\frac{e^{-\lambda}}{\lambda}+\ln\lambda+\gamma\le e^{-1}+\gamma<0.95$ since $\frac{e^{-\lambda}}{\lambda}+\ln\lambda+\gamma$ is increasing for $\lambda\ge 0.95$, and for $\lambda>1$, $\frac{e^{-\lambda}}{\lambda}+\gamma<e^{-1}+\gamma<0.95$ because the function $\frac{e^{-\lambda}}{\lambda}+\gamma$ is decreasing, completing the proof of \Ref{delta1proof5}. The last claim in \Ref{bounddelta1} is easily obtained from 
\Ref{delta1proof2} and \Ref{delta2proof5}.

We then apply \Ref{delta2proof5} and \Ref{delta2proof6} in \Ref{delta2proof2} to obtain
\begin{eqnarray}
\left|\Delta^2h_f(\xi;\alpha,\beta)\right|&\arrayleq&\frac{0.5}{n\wedge\lambda}\left\{2-e^{-n\wedge\lambda}
	-\frac{1}{n\wedge\lambda}(1-e^{-n\wedge\lambda})\right\}\label{delta2proof4}\\
&\arrayleq&\frac{1}{n\wedge\lambda}.\label{delta2proof7}
\end{eqnarray}
Now, we show that 
\begin{equation}
\left|\Delta^2h_f(\xi;\alpha,\beta)\right|\le \frac{1.09}{n+1}+\frac{1}{\lambda}.
\label{delta2proof8}
\end{equation}
As a matter of fact, by \Ref{delta2proof3} and \Ref{delta2proof7}, \Ref{delta2proof8} clearly holds for $n=0$ and $n\ge\lambda$, hence it remains to show \Ref{delta2proof8} for $1\le n<\lambda$. Using \Ref{delta2proof4}, it suffices to prove that
\begin{equation}
\frac{0.5(n+1)}{n}\left\{2-e^{-n}
	-\frac{1}{n}(1-e^{-n})\right\}\le 1.09.\label{delta2proof9}
\end{equation}
However, for $n\ge 12,$
$$\frac{0.5(n+1)}{n}\left\{2-e^{-n}
	-\frac{1}{n}(1-e^{-n})\right\}<\frac{n+1}{n}\le \frac{13}{12}<1.09$$
while for $1\le n\le 11$, one can verify \Ref{delta2proof9}  for each value of $n$. 

Finally, we prove 
\begin{equation}
|\Delta^2h_f(\xi;\alpha,\beta)|\le\frac{2\ln\lambda}{\lambda}{\bf 1}_{\{\lambda\ge1.76\}}+0.75\,{\bf 1}_{\{\lambda<1.76\}}.\label{delta2proof10}
\end{equation}
The claim \Ref{delta2proof10} is evident for $\lambda < 1.76$, so we assume $\lambda \geq 1.76$. On the other hand, if $Y$ follows $\poisson(\nu)$, then
\begin{equation*}
\mean\frac{1}{Y+2}=\mean\left\{\frac{1}{Y+1}-\frac{1}{(Y+1)(Y+2)}\right\}=
\frac{\nu-1+e^{-\nu}}{\nu^2}.
\end{equation*}
Therefore,
$$\mean\left\{\frac{1}{Z_\xi(t)+1}+\frac{1}{Z_\xi(t)+2}\right\}\le\mean\left\{\frac{1}{Z_\emptyset(t)+1}+\frac{1}{Z_\emptyset(t)+2}\right\}=\frac{2\lambda_t-1+(1-\lambda_t)e^{-\lambda_t}}{\lambda_t^2},
$$
which, together with \Ref{delta2proof2}, ensures that
\begin{eqnarray*}
\left|\Delta^2h_f(\xi;\alpha,\beta)\right|
&\arrayleq&\int_0^\infty e^{-2t}\frac{2\lambda_t-1+(1-\lambda_t)
	e^{-\lambda_t}}{\lambda_t^2}\;dt\\
&\arrayeq&\int_0^1(1-s)\frac{2\lambda s-1+(1-\lambda s)
	e^{-\lambda s}}{\lambda^2s^2}\;ds\\
&\arrayeq&-\frac{3}{\lambda}+\frac{2(1-e^{-\lambda})}{\lambda^2}+\left(\frac{2}{\lambda}+\frac{1}{\lambda^2}\right)\int_0^\lambda\frac{1-e^{-t}}{t}\;dt\\
&\arrayleq&-\frac{3}{\lambda}+\frac{2(1-e^{-\lambda})}{\lambda^2}+\left(\frac{2}{\lambda}+\frac{1}{\lambda^2}\right)\left(\frac{e^{-\lambda}}{\lambda}+\ln\lambda+\gamma\right)\\
&\arrayeq&-\frac{3}{\lambda}+\frac{2}{\lambda^2}+\frac{e^{-\lambda}}{\lambda^3}+\left(\frac{2}{\lambda}+\frac{1}{\lambda^2}\right)\left(\ln\lambda+\gamma\right)=:a(\lambda),
\end{eqnarray*}
where the first equality is by the change of variable $s=1-e^{-t}$ and the last inequality is from items 5.1.39 and 5.1.19 of \citet{abrastegun72}. 
Now, $b(\lambda):=a(\lambda)\lambda-2\ln\lambda$ is decreasing in $\lambda$ for $\lambda>1$ and $b(1.76)<0$,
which implies that $a(\lambda)\le \frac{2\ln\lambda}{\lambda}$ for $\lambda \ge 1.76$, completing the proof of \Ref{delta2proof10}.
\end{proof}

The following counter-example, adapted from \citet{brownxia95factors}, shows that the logarithmic factors in \Ref{bounddelta1} and \Ref{bounddelta2} can not be removed.

\begin{example} Let $\mcx=\{0,1\}$ with metric $d_0(x,y)=|x-y|$, let $\bl$ satisfy $\bl\{1\}=1$ and $\bl\{0\}=\lambda-1>0$, and define a $\donebar$-Lipschitz function on $\mfn$ as
$$f(\xi)=\left\{
\begin{array}{ll}
\frac{1}{|\xi|+1},&\mbox{ if }\xi\{1\}=0,\\
0,&\mbox{ if }\xi\{1\}>0.
\end{array}\right.$$
Using the fact that $\ZZ_\emptyset(t)\{0\}$ follows $\poisson\bigl((\lambda-1)(1-e^{-t})\bigr)$ and $\ZZ_\emptyset(t)\{1\}$ follows $\poisson(1-e^{-t})$, we have from \Ref{delta1proof1} and \Ref{delta2proof1} that, as $\lambda\rightarrow\infty$,
\begin{eqnarray*}
\left|\Delta h_f(\emptyset;1)\right|
&\arrayeq&\int_0^\infty e^{-t}\mean\frac{1}{\ZZ_\emptyset(t)\{0\}+1}\prob[\ZZ_\emptyset(t)\{1\}=0]\;dt\\
&\arrayeq&\int_0^\infty e^{-t}\frac{1-e^{-(\lambda-1)(1-e^{-t})}}{(\lambda-1)(1-e^{-t})}e^{-(1-e^{-t})}\;dt\\
&\arrayeq&\int_0^1 \frac{1-e^{-(\lambda-1)s}}{(\lambda-1)s}e^{-s}\;ds\ \ \ (\mbox{where }s=1-e^{-t})\\
&\arraygeq&\frac{e^{-1}}{\lambda-1}\int_0^{\lambda-1} \frac{1-e^{-u}}{u}\;du\,\asymp\, \frac{\ln\lambda}{\lambda},
\end{eqnarray*}
and
\begin{eqnarray*}
|\Delta^2h(\emptyset;1,1)|&\arrayeq&\int_0^\infty e^{-2t}\mean\frac{1}{\ZZ_\emptyset(t)\{0\}+1}\prob[\ZZ_\emptyset(t)\{1\}=0]\;dt\\
&\arrayeq&\int_0^\infty e^{-2t}\frac{1-e^{-(\lambda-1)(1-e^{-t})}}{(\lambda-1)(1-e^{-t})}e^{-(1-e^{-t})}\;dt\\
&\arrayeq&\int_0^1 (1-s)\frac{1-e^{-(\lambda-1)s}}{(\lambda-1)s}e^{-s}\;ds\ \ \ (\mbox{where }s=1-e^{-t})\\
&\arraygeq&\frac{e^{-1}}{\lambda-1}\int_0^{\lambda-1} \left\{1-\frac{u}{\lambda-1}\right\}\frac{1-e^{-u}}{u}\;du\,\asymp\, \frac{\ln\lambda}{\lambda}.\  \qed
\end{eqnarray*}
\end{example}

As noted before, $\donebar$ is the same as $\done$ when the point patterns have the same number of points while it is smoother than $\done$ when the point patterns do not have the same number of points. On the other hand, for any two point processes $\Xi$ and $\Eta$ on $\mcx$, we have
\begin{equation}\mean d_1(\Xi,\Eta)
=\mean\bigl(d_1(\Xi,\Eta)\bigm| |\Xi|=|\Eta|\bigr)\prob[|\Xi|=|\Eta|]
+\prob[|\Xi|\ne|\Eta|].
\label{xia:comment1}
\end{equation}
When we consider $\prob[|\Xi|\ne|\Eta|]$, which corresponds to the total variation distance between the distributions of the total number of points of the two point processes \citep[see][]{bb92comp}, there is no such logarithmic component in Stein's factor, which means that the logarithmic component in $\done$ was brought in only by the discrepancies of locations of points when the point patterns have the same number of points. However, this problem is shared by $\donebar$, that is, the Stein factors for $\donebar$ will inevitably have the logarithmic component as well.

It is also worthwhile to note that, since $\mean\donebar(\Xi,\Eta)$ replaces the term $\PP[\abs{\Xi} \neq \abs{\Eta}]$ in \Ref{xia:comment1} with a smaller $\EE \dr(\abs{\Xi},\abs{\Eta})$, we would expect a bigger improvement on bounding  $\dtwobar\left(\msl(\Xi),\msl(\Eta)\right)$ when $\PP[\abs{\Xi} \neq \abs{\Eta}]$ is ``dominant'' at the right hand side of \Ref{xia:comment1} under the best coupling. Such an improvement is obtained in the next two subsections.

\subsection{Poisson process approximation of a Bernoulli process}  \label{ssec:BePo}

Let $\mcx=[0,1]$ with $d_0(x,y)=|x-y|$, and let $X_1, \dots, X_n$ be independent and identically distributed Bernoulli random variables with $\prob[X_1=1]=p$. Then $\Xi=\sum_{i=1}^nX_i\delta_{i/n}$ defines a Bernoulli process on $\mcx$. If we let $T_0$, $T_1$, $\dots$, $T_n$ be independent and identically distributed uniform random variables on $\mcx$ which are also independent of $\{X_1,\dots,X_n\}$, then
$$Y=\sum_{i=1}^nX_i\delta_{T_i}$$
defines a binomial process on $\mcx$ \citep[p.~29]{reiss93}. By \citet{xiazhang07},
\begin{equation}
\dtwo(\msl(\Xi),\msl(Y))\le\left(\frac{1}{2n}+\frac{p}{2}\right)\wedge\frac1{\sqrt{3np}}.\label{ZX1}
\end{equation}
To estimate $\dtwobar(\msl(Y),\poisson(\bl))$ with $\bl(dx)=np\, dx$, we employ Stein's method for Poisson process approximation. As a matter of fact, it follows from \Ref{generator1} that
\begin{eqnarray*}
\mean\mca h(Y)&\arrayeq&\mean\left(\int_\mcx\bigl(h(Y+\delta_\alpha)-h(Y)\bigr)\;\bl(d\alpha)+\int_\mcx\bigl(h(Y-\delta_\alpha)-h(Y)\bigr)\;Y(d\alpha)\right)\\
&\arrayeq&np\,\mean\bigl(h(Y+\delta_{T_0})-h(Y)\bigr)+\sum_{i=1}^n\mean\bigl(h(Y^i)-h(Y^i+\delta_{T_i})\bigr)p\\
&\arrayeq&np\,\mean\bigl\{\bigl(h(Y+\delta_{T_0})-h(Y)\bigr)-\bigl(h(Y^1+\delta_{T_0})-h(Y^1)\bigr)\bigr\},
\end{eqnarray*}
where $Y^i=Y-X_i\delta_{T_i}$. Define
$$g(i)=\mean\bigl(h(Y+\delta_{T_0})-h(Y)\bigm|\abs{Y}=i\bigr)=\mean\Bigl(h\bigl(\tsum_{j=0}^i\delta_{T_j}\bigr)-h\bigl(\tsum_{j=1}^i\delta_{T_j}\bigr)\Bigr),$$
then
\begin{eqnarray*}
\left|\mean\mca h(Y)\right|&\arrayeq&np\left|\mean\bigl(g(|Y|)-g(|Y^1|)\bigr)\right|=np^2\left|\mean\bigl(g(|Y^1|+1)-g(|Y^1|)\bigr)\right|
\\
&\arrayleq& 2np^2\|g\|\dtv\bigl(\msl(|Y^1|),\msl(|Y^1|+1)\bigr),\nonumber
\end{eqnarray*}
where $\norm{\cdot}$ denotes the supremum norm and, for any two nonnegative integer-valued random variables $U_1$ and $U_2$,
$$\dtv\bigl(\msl(U_1),\msl(U_2)\bigr):=\frac12 \sup_{\tg:\Zplus \to [-1,1]} \bigabs{\EE \tg(U_1) - \EE \tg(U_2)}.$$

On the other hand, by Lemma~1 in \citet{barbourjensen89}, 
$$\dtv\bigl(\msl(|Y^1|),\msl(|Y^1|+1)\bigr)\le \max_{0\le i\le n-1}\prob[|Y^1|=i]\le 1\wedge\frac{1}{2\sqrt{(n-1)p(1-p)}}
$$
and using \Ref{bounddelta1}, we have, for $f\in\ftwobar$, that 
$$\left|\mean\mca h_f(Y)\right|\le\frac{\left(0.95+\ln^+(np)\right)p}{\frac12\vee\sqrt{(n-1)p(1-p)}},
$$
which implies from \Ref{Steinidentity1} that
\begin{equation}\dtwobar(\msl(Y),\poisson(\bl))=\sup_{f\in\ftwobar}\left|\mean\mca h_{f}(Y)\right|\le\frac{\left(0.95+\ln^+(np)\right)p}{\frac12\vee\sqrt{(n-1)p(1-p)}}.
\label{binomialprocesstopoisoon}
\end{equation}
Now, collecting \Ref{ZX1} and \Ref{binomialprocesstopoisoon} gives
\begin{thm} \label{PoissonBernoullithm} With the above setup, we have
$$\dtwobar(\msl(\Xi),\poisson(\bl))\le \left(\frac{1}{2n}+\frac{p}{2}\right)\wedge\frac1{\sqrt{3np}}+\frac{\left(0.95+\ln^+(np)\right)p}{\frac12\vee\sqrt{(n-1)p(1-p)}}.$$
\end{thm}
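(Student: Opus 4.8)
The plan is to exploit the triangle inequality for $\dtwobar$ together with the relationship $\dtwobar \leq \dtwo$ from Proposition~\ref{prop:dtwobarprops}(ii), splitting the distance from $\msl(\Xi)$ to $\poisson(\bl)$ through the intermediate binomial process $\msl(Y)$. Concretely, since $\Xi = \sum_{i=1}^n X_i \delta_{i/n}$ and $Y = \sum_{i=1}^n X_i \delta_{T_i}$ share the same multiplicities $X_1,\dots,X_n$ and differ only in the \emph{positions} of the retained points, the cardinalities $|\Xi|$ and $|Y|$ always agree under the natural coupling; hence $\donebar(\Xi,Y) = \done(\Xi,Y)$ on that coupling, and by Proposition~\ref{prop:dtwobarprops}(i) we get $\dtwobar(\msl(\Xi),\msl(Y)) \leq \dtwo(\msl(\Xi),\msl(Y))$, which by~\eqref{ZX1} is at most $\bigl(\tfrac{1}{2n}+\tfrac{p}{2}\bigr) \wedge \tfrac{1}{\sqrt{3np}}$. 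The triangle inequality then reduces the theorem to bounding $\dtwobar(\msl(Y),\poisson(\bl))$ by the second summand.

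For the second piece I would run Stein's method for Poisson process approximation as set out in the paragraph preceding the theorem. First, using the generator identity~\eqref{generator1} and the exchangeability of $T_0,\dots,T_n$, one rewrites $\EE \mca h(Y)$ as $np\,\EE\bigl\{(h(Y+\delta_{T_0})-h(Y)) - (h(Y^1+\delta_{T_0})-h(Y^1))\bigr\}$; introducing $g(i) := \EE\bigl(h(Y+\delta_{T_0})-h(Y)\bigm||Y|=i\bigr)$ this collapses to $np^2\,\EE(g(|Y^1|+1)-g(|Y^1|))$, which is bounded by $2np^2 \|g\|\,\dtv\bigl(\msl(|Y^1|),\msl(|Y^1|+1)\bigr)$. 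Here $\|g\| \leq \sup_{\xi,\alpha}|\Delta h_f(\xi;\alpha)| \leq \tfrac{0.95+\ln^+(np)}{np}$ by~\eqref{bounddelta1} with $\lambda = np$, and $\dtv\bigl(\msl(|Y^1|),\msl(|Y^1|+1)\bigr) \leq 1 \wedge \tfrac{1}{2\sqrt{(n-1)p(1-p)}}$ by Lemma~1 of~\citet{barbourjensen89} applied to the $\binomial(n-1,p)$-distributed $|Y^1|$. Multiplying through gives $|\EE \mca h_f(Y)| \leq \tfrac{(0.95+\ln^+(np))p}{\frac12 \vee \sqrt{(n-1)p(1-p)}}$, and taking the supremum over $f \in \ftwobar$ together with the Stein identity~\eqref{Steinidentity1} yields~\eqref{binomialprocesstopoisoon}. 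Adding the two bounds finishes the proof.

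The only genuinely delicate point is the very first step: one must check that the $\dtwo$-bound~\eqref{ZX1} of~\citet{xiazhang07} indeed transfers to $\dtwobar$ \emph{without} any deterioration. This is immediate from $\dtwobar \leq \dtwo$ (Proposition~\ref{prop:dtwobarprops}(ii)), so in fact there is no obstacle there at all — the estimate is simply inherited. A minor subtlety worth a sentence is that in applying~\eqref{bounddelta1} to bound $\|g\|$ one needs $h_f$ to be well-defined and the Lipschitz estimate to hold for the relevant $f \in \ftwobar$; this is exactly the content of the preceding proposition, whose bound~\eqref{bounddelta1} was proved for arbitrary $\donebar$-Lipschitz $f$. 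Thus the proof is essentially an assembly of~\eqref{ZX1}, the Stein argument for the binomial-to-Poisson step, and the triangle inequality for $\dtwobar$.
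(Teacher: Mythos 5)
Your proposal is correct and follows essentially the same route as the paper: the triangle inequality through the binomial process $Y$, inheriting the bound \eqref{ZX1} via $\dtwobar \leq \dtwo$ (Proposition~\ref{prop:dtwobarprops}(ii)), and the Stein argument with $g$, the estimate \eqref{bounddelta1} with $\lambda = np$, and Lemma~1 of \citet{barbourjensen89} for the binomial-to-Poisson step, exactly as in \eqref{binomialprocesstopoisoon}. The only nitpick is the passing citation of Proposition~\ref{prop:dtwobarprops}(i) for the transfer of \eqref{ZX1}, which you correct yourself later by invoking part~(ii); the coupling remark is true but superfluous.
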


\begin{nonumrem}
An immediate message from Theorem~\ref{PoissonBernoullithm} is that, if $n$ is large, it is almost impossible to distinguish between the distributions of the two processes. It is quite a contrast to the conclusion under $\done$ where it is essential to have a very small $p$ as well as a large $n$ to ensure a valid Poisson process approximation (see \citealp{xia97}). In practice, statisticians would use a Poisson process rather than a Bernoulli process when $n$ is large, confirming our conclusion under~$\donebar$.
\end{nonumrem}

\begin{nonumrem}
It is a tantalizing problem to remove the $\ln^{+}\lambda$ term in the upper bound. We conjecture that, at the cost of more complexity, the actual bound should be of order $\left(\frac{1}{n}+p\right)/(1\vee\sqrt{np}).$
\end{nonumrem}

\subsection{Point processes of i.i.d.\ points}  \label{ssec:iidpoints}

Let $\Xi := \sum_{i=1}^M \delta_{X_i}$ and $\Eta := \sum_{i=1}^N \delta_{Y_i}$, where $M$ and $N$ are integer-valued random variables, $(X_i)_{i \in \NN}$ is a sequence of i.i.d.\ $\mcx$-valued random elements that is independent of $M$, and $(Y_i)_{i \in \NN}$ is a sequence of i.i.d.\ $\mcx$-valued random elements that is independent of $N$. 
Denote by $\dw$ the Wasserstein metric between random elements of $\mcx$ with respect to $\dzero$. 
\begin{prop} \label{prop:iidpoints}
   We have
   \begin{equation*}
   \begin{split}
      \max \Bigl( \drw \bigl( \msl(&M), \msl(N) \bigr), c_1 \dw \bigl( \msl(X_1), \msl(Y_1) \bigr) \Bigr) \\
         &\leq \dtwobar \bigl( \msl(\Xi), \msl(\Eta) \bigr)
      \leq \drw \bigl( \msl(M), \msl(N) \bigr) + c_2 \dw \bigl( \msl(X_1), \msl(Y_1) \bigr),
   \end{split}
   \end{equation*}
   where
   \begin{equation*}
      c_1 = c_1 \bigl( \msl(M), \msl(N) \bigr) = {\max \bigl( \PP[M>0],
      \PP[N>0] \bigr)}
   \end{equation*}
   and
   \begin{equation*}
      c_2 = c_2 \bigl( \msl(M), \msl(N) \bigr) = \EE \biggl( \frac{\tM \wedge \tN}{\tM \vee \tN}
         {\bf 1}_{\{\tM \vee \tN > 0\}} \biggr) \leq {\min \bigl( \PP[M>0], \PP[N>0] \bigr)}
   \end{equation*}
   for random variables $\tM$ and $\tN$ that are coupled so that
   $\EE \dr\bigl(\tM,\tN\bigr)=\drw \bigl( \msl(M), \msl(N) \bigr)$.
\end{prop}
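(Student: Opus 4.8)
The plan is to prove the three inequalities in turn, using the coupling formula $\dtwobar(P,Q)=\min_{\Xi\sim P,\,\Eta\sim Q}\EE\donebar(\Xi,\Eta)$ of Proposition~\ref{prop:dtwobarprops}(i) for the upper bound, the contraction estimate of Proposition~\ref{prop:dtwobarprops}(ii) for one half of the lower bound, and a single carefully chosen test function in $\ftwobar$, obtained from Proposition~\ref{prop:ftwopfunctions}, for the other half.

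\emph{Upper bound.} Let $\tM,\tN$ be a coupling of $\msl(M)$ and $\msl(N)$ with $\EE\dr(\tM,\tN)=\drw(\msl(M),\msl(N))$, and let $((X_i,Y_i))_{i\ge1}$ be i.i.d.\ pairs, independent of $(\tM,\tN)$, each realising an optimal $\dw$-coupling of $\msl(X_1)$ and $\msl(Y_1)$ (which exists since $\mcx$ is compact). Then $\Xi:=\sum_{i=1}^{\tM}\delta_{X_i}$ and $\Eta:=\sum_{i=1}^{\tN}\delta_{Y_i}$ have the required marginal laws, so by Proposition~\ref{prop:dtwobarprops}(i) it suffices to bound $\EE\donebar(\Xi,\Eta)$. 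Conditionally on $\tM=m$, $\tN=n$ with, say, $m\le n$, bounding the optimal matching in $\donebar$ from above by the one that pairs $X_i$ with $Y_i$ for $i\le m$ and leaves $n-m$ points of $\Eta$ unmatched yields $\donebar(\Xi,\Eta)\le\frac{n-m}{n}+\frac1n\sum_{i=1}^m\dzero(X_i,Y_i)$, whose conditional expectation is $\dr(m,n)+\frac{m\wedge n}{m\vee n}\dw(\msl(X_1),\msl(Y_1))$; the case $m>n$ is symmetric. Integrating over $(\tM,\tN)$ then gives $\EE\donebar(\Xi,\Eta)\le\drw(\msl(M),\msl(N))+c_2\,\dw(\msl(X_1),\msl(Y_1))$, and the bound $c_2\le\min(\PP[M>0],\PP[N>0])$ follows since $\frac{\tM\wedge\tN}{\tM\vee\tN}\,{\bf 1}_{\{\tM\vee\tN>0\}}\le{\bf 1}_{\{\tM>0\}}\wedge{\bf 1}_{\{\tN>0\}}$.

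\emph{Lower bound.} Because $|\Xi|=M$ and $|\Eta|=N$, the inequality $\drw(\msl(M),\msl(N))\le\dtwobar(\msl(\Xi),\msl(\Eta))$ is exactly Proposition~\ref{prop:dtwobarprops}(ii). For the term involving $c_1$, note first that the whole assertion is symmetric under interchanging $(\Xi,M,(X_i))$ with $(\Eta,N,(Y_i))$, so I may assume $\PP[M>0]\ge\PP[N>0]$, i.e.\ $c_1=\PP[M>0]=:p$; write $q:=\PP[N>0]$. Fix $\eps>0$ and, by Kantorovich--Rubinstein duality together with $\dzero\le1$, pick a $1$-Lipschitz $g:\mcx\to[0,1]$ with $\EE g(X_1)-\EE g(Y_1)\ge\dw(\msl(X_1),\msl(Y_1))-\eps$ (replace $g$ by $1-g$ if necessary). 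Apply Proposition~\ref{prop:ftwopfunctions} with $l=1$ and $K=g$ --- its hypotheses (i)--(iii) are easily checked (for (ii), take all $\ubar_i$ equal to a maximiser of $g$ over the compact space $\mcx$) --- and choose the reference point $x_0$ in its proof to be a minimiser of $g$; this produces $F\in\ftwobar$ with $F(\xi)=\frac1m\sum_{i=1}^m g(x_i)$ for $|\xi|=m\ge1$ and $F(0)=g(x_0)=\min_{\mcx}g$. Since $(X_i)$ is independent of $M$ and $(Y_i)$ of $N$, a short computation gives $\EE F(\Xi)-\EE F(\Eta)=(q-p)F(0)+p\,\EE g(X_1)-q\,\EE g(Y_1)\ge p\bigl(\EE g(X_1)-\EE g(Y_1)\bigr)$, the last step using $q-p\le0$ and $F(0)=\min_\mcx g\le\EE g(Y_1)$. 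Hence $\dtwobar(\msl(\Xi),\msl(\Eta))\ge p\bigl(\dw(\msl(X_1),\msl(Y_1))-\eps\bigr)$, and letting $\eps\downarrow0$ finishes the proof.

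The individual steps are routine; the one point that needs care is matching the constant $c_1$ exactly in the second part of the lower bound. This is what forces the reference point in the $\ftwobar$-extension to be a \emph{minimiser} of $g$ rather than an arbitrary point of $\mcx$, so that the $F(0)$-term in $\EE F(\Xi)-\EE F(\Eta)$ has the right sign when $\PP[M>0]\ge\PP[N>0]$. Beyond that I anticipate no real obstacle.
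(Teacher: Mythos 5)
Your proof is correct and follows essentially the same route as the paper: the identical independent-coupling argument for the upper bound (including the same bound on $c_2$), Proposition~\ref{prop:dtwobarprops}(ii) for the $\drw$ part of the lower bound, and the empirical-average test function $\xi \mapsto \abs{\xi}^{-1}\int_{\mcx} g \, d\xi$ for the $c_1$ part. The only (harmless) differences are in bookkeeping at the empty configuration: the paper sets $\tf(0) := \EE \tg(X_1)$, which turns the computation into an exact identity giving $\dw\,\PP[N>0]$ and then appeals to symmetry, and it verifies $\tf \in \ftwobar$ directly with an exact maximizing $\tg$ (Arzel\`a--Ascoli), whereas you set $F(0) = \min_{\mcx} g$, assume WLOG $\PP[M>0] \geq \PP[N>0]$ and argue by inequality, verify the Lipschitz property by invoking Proposition~\ref{prop:ftwopfunctions} with $l=1$, and work with an $\eps$-optimal $g$.
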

\begin{rem}
   An interesting special case is given if $\Xi$ and $\Eta$ are Poisson processes. For finite
   measures $\bm$ and $\bn$ on $\mcx$, we obtain from Proposition~\ref{prop:iidpoints} that
   \begin{equation*}
      \dtwobar \bigl( \poisson(\bm), \poisson(\bn) \bigr) \leq \frac{\abs{\mu - \nu}}{\mu \vee \nu}
      + (1 - e^{- (\mu \wedge \nu)}) \dw \bigl( \bm / \mu, \bn / \nu \bigr),
   \end{equation*}
   which is an improvement by a factor of order $1/\sqrt{\mu \vee \nu}$ for $\mu, \nu \to \infty$ in the 
   first summand when compared to a corresponding $\dtwo$-bound (see for example \citealp{brownxia95},
   Equation~(2.8)).
   Estimation of the $\drw$-term was achieved by considering a Poisson process $Z$ on $\Rplus$ with
   intensity~$1$ and defining a coupling pair by $\tM := Z((0,\mu])$ and $\tN := Z((0,\nu])$.
\end{rem}
\begin{proof}[Proof of Proposition~\ref{prop:iidpoints}]
   {\it Upper bound:}
   Let $\tM \eqinlaw M$ and $\tN \eqinlaw N$ be coupled according to a $\drw$-coupling, so that $\EE 
   \bigl( \frac{\abs{\tM-\tN}}{\tM \vee \tN} {\bf 1}_{\{\tM \vee \tN > 0\}} \bigr) = \drw \bigl( \msl(M), 
   \msl(N) \bigr)$, and let $\tX_i \eqinlaw X_i$ and $\tY_i \eqinlaw Y_i$ with $\EE \dzero(\tX_i, \tY_i) 
   = \dw \bigl( \msl(X_1),\msl(Y_1) \bigr)$ for every $i \in \NN$ in such a way that the pairs 
   $(\tM,\tN)$, $(\tX_1,\tY_1)$, $(\tX_2,\tY_2), \ldots$ are independent. We then obtain
   \begin{equation} \label{eq:iidupper}
   \begin{split}
      \dtwobar \bigl( \msl(\Xi), \msl(\Eta) \bigr) &\leq \EE \donebar \bigl( \tsum_{i=1}^{\tM} \delta_{
         \tX_i}, \, \tsum_{j=1}^{\tN} \delta_{\tY_i} \bigr) \\ 
      &\leq \EE \biggl( \frac{\abs{\tM - \tN}}{\tM \vee \tN} {\bf 1}_{\{\tM \vee \tN > 0\}} \biggr) + \EE 
         \biggl( \frac{{\bf 1}_{\{\tM \vee \tN > 0\}}}{\tM \vee \tN}
         \sum_{i=1}^{\tM \wedge \tN} \dzero(\tX_i, \tY_i) \biggr),
   \end{split}
   \end{equation}
   which, by the independence between $(\tM,\tN)$ and $\{(\tX_i, \tY_i),\ i\ge1\}$, and the assumptions on 
   the distributions of those pairs, yields the upper bound claimed.

   The bound for the factor $c_2$ follows from $\EE \bigl( \frac{\tM \wedge \tN}{\tM \vee \tN} 
   {\bf 1}_{\{\tM \vee \tN > 0\}} \bigr) \leq \PP[\tM>0, \tN>0]$ 
   and $\PP[\tM=\tN=0] = \min \bigl( \PP[M=0], \PP[N=0] \bigr)$, the proof of which is straightforward.
   
   {\it Lower bound:}
   Let $\fwstar := \bigl\{ g : \mcx \to [0,1]; \,
   \abs{g(x) - g(y)} \leq \dzero(x,y) \text{ for all } x,y \in \mcx \bigr\}$, and let $\tg \in \fwstar$
   be a mapping with $\bigabs{ \EE \tg(X_1) - \EE \tg(Y_1) } = \dw \bigl( \msl(X_1), \msl(Y_1) \bigr)$.
   Such a mapping exists by $\dw \bigl( \msl(X_1), \msl(Y_1) 
   \bigr) = \sup_{g \in \fwstar} \bigabs{ \EE g(X_1) - \EE g(Y_1) }$,
   where the supremum is attained because $\fwstar$ is a compact subset of ${\rm C}(\mcx, \RR)$ by the 
   Arzel\`{a}-Ascoli theorem and the mapping $\bigl[ g \mapsto 
   \bigabs{ \EE g(X_1) - \EE g(Y_1) } \bigr]$ is continuous (both statements are with respect to
   the topology of uniform convergence).

   Define $\tf: \mfn \to [0,1]$ by $\tf(\xi) := \frac{1}{\abs{\xi}} \int_{\mcx} \tg(x) \;
   \xi(dx)$ for $\xi \in \mfn \setminus \{0\}$ and $\tf(0) := {\EE \tg(X_1)}$. We next check that $\tf 
   \in \ftwobar$. {It is immediately clear that $\abs{\tf(\xi) - \tf(0)} \leq 1 = \donebar(\xi,0)$ if   
   $\xi \in \mfn \setminus \{0\}$.}
   Let {then $\xi = \sum_{i=1}^m \delta_{x_i}$ and $\eta = \sum_{j=1}^n \delta_{y_j}$ both be in $\mfn 
   \setminus \{0\}$}, where we assume without loss of generality that $m \leq n$ and $\tf(\xi) \geq 
   \tf(\eta)$ (otherwise interchange $\xi$ and $\eta$ and/or replace~$\tg$ by $1-\tg \in \fwstar$),
   and that the points are numbered according to a $\donebar$-pairing  
   such that $\frac{1}{n} \bigl( \sum_{i=1}^m \dzero(x_i,y_i) + (n-m) \bigr) = \donebar(\xi,\eta)$.
   Let $k \in \argmax_{1 \leq i \leq m} \tg(x_i)$, and $x_i := x_k$ for $m+1 \leq i \leq n$, which
   implies
   \begin{equation*}
   \begin{split}
      \bigabs{\tf(\xi) - \tf(\eta)} &= \frac{1}{m} \sum_{i=1}^m \tg(x_i) - \frac{1}{n} \sum_{i=1}^n
         \tg(y_i) \\
      &\leq \frac{1}{n} \sum_{i=1}^n \tg(x_i) - \frac{1}{n} \sum_{i=1}^n
         \tg(y_i) \\
      &\leq \frac{1}{n} \sum_{i=1}^n \dzero(x_i, y_i) \\
      &\leq \donebar(\xi,\eta),
   \end{split}
   \end{equation*}
   and therefore $\tf \in \ftwobar$.

   Choose pairs $(\tM,\tN), (\tX_1,\tY_1), (\tX_2,\tY_2), \ldots$ in the same way as for the proof of 
   the upper bound (although the coupling of $\tX_i$ and $\tY_i$ in each of the pairs is not 
   important now). We obtain
   \begin{equation*}
   \begin{split}
      \dtwobar \bigl( \msl(\Xi), \msl(\Eta) \bigr) &\geq \bigabs{ \EE \tf \bigl( \tsum_{i=1}^{\tM}
         \delta_{\tX_i} \bigr) - \EE \tf \bigl( \tsum_{i=1}^{\tN} \delta_{\tY_i} \bigr) } \\
      &= \biggabs{ \EE \biggl\{ \biggl( \frac{1}{\tM} \sum_{i=1}^{\tM} \tg(\tX_i) - \frac{1}{\tN}
         \sum_{j=1}^{\tN} \tg(\tY_j) \biggr) {\bf 1}_{\{\tM > 0, \tN > 0\}} \biggr\}  \\[-1.5mm]
      &{\hspace*{12mm} {} + \EE \biggl\{ \biggl( \frac{1}{\tM} \sum_{i=1}^{\tM} \tg(\tX_i) - \EE \tg(X_1)
         \biggr) {\bf 1}_{\{\tM > 0, \tN = 0\}} \biggr\}} \\[-1.5mm]
      &{\hspace*{12mm} {} + \EE \biggl\{ \biggl( \EE \tg(X_1) - \frac{1}{\tN} \sum_{j=1}^{\tN}    
         \tg(\tY_j) \biggr) {\bf 1}_{\{\tM = 0, \tN > 0\}} \biggr\}} } \\
      &= \Bigabs{ {\bigl(\EE \tg(X_1) - \EE \tg(Y_1) \bigr) \, \PP[\tM>0, \tN>0]} \\[-1.5mm] 
      &{\hspace*{12mm} {} + \bigl(\EE \tg(X_1) - \EE \tg(Y_1) \bigr) \, \PP[\tM=0, \tN>0]}}\\[1mm]
      &= \dw \bigl( \msl(X_1), \msl(Y_1) \bigr) \, {\PP[N > 0]}.
   \end{split}
   \end{equation*}
   {Since the above argument is symmetric in $\Xi$ and $\Eta$, we obtain the lower bound when combining 
   it with Proposition~\ref{prop:dtwobarprops}(ii).} 
\end{proof}

\section{A statistical application}
\setcounter{figure}{0}

In order to show the potential of $\donebar$ and $\dtwobar$ in statistical applications, we propose a test procedure based on these two metrics. Suppose that our data consists of a few i.i.d.\ realizations of a point process $\Xi$, and we would like to test if $\Xi \sim P$ for a certain probability measure $P$ on $\mfn$. Such multiple point pattern data may arise, among other examples, from recording degenerate cells in tissue samples or plants in a large population that is sampled only via a few comparatively small windows.

In what follows, we restrict our attention to a test for spatial homogeneity under the assumption that $\Xi$ is a Poisson process on $W=[0,1]^2$ with unknown expectation measure $\bl$. This limits the alternative hypothesis sufficiently to keep our simulation study within the scope of this article.
Suppose that $\xi_1, \ldots, \xi_N$ are realizations of i.i.d.\ copies $\Xi_1, \ldots, \Xi_N$ of $\Xi$ and that the total mass $\lambda := \bl([0,1]^2)$ of the expectation measure $\bl$ is known (otherwise we just take the canonical estimate $\frac{1}{N} \sum_{i=1}^N \abs{\xi_i}$). Our null hypothesis is then $\Xi \sim \poisson(\lambda \Leb^2)$. Write $P_N := \frac{1}{N} \sum_{i=1}^N \delta_{\xi_i} \in \mfp(\mfn)$ for the empirical distribution of our data. We perform a Monte Carlo test where the test statistic would ideally be
\begin{equation}
   T(\xi_1,\ldots,\xi_N) := \dtwobar \bigl( P_N, \poisson(\lambda \Leb^2) \bigr),
\end{equation}
but since this is computationally intractable, we replace it by the randomized test statistic
\begin{equation}
   T(\xi_1,\ldots,\xi_N; \eta_1, \ldots, \eta_N) := \dtwobar \bigl( P_N, Q_N \bigr),
\end{equation}
where $Q_N := \frac{1}{N} \sum_{i=1}^N \delta_{\eta_i}$ for realizations $\eta_i$ of $\poisson(\lambda \Leb^2)$-processes $\Eta_i$ that are independent amongst each other and of the $\Xi_i$.
The null hypothesis is rejected at significance level $\alpha=0.05$ if $T(\xi_1,\ldots,\xi_N; \eta_1, \ldots, \eta_N)$ ranks among the five highest values when pooled with 99 simulations of $T(\teta_1,\ldots,\teta_N; \eta_1, \ldots, \eta_N)$, where $\teta_1,\ldots,\teta_N,
\eta_1,\ldots,\eta_N$ are all independent $\poisson(\lambda \Leb^2)$-realizations. 

\begin{figure}[t]
\hspace*{2mm}
\vspace*{-9mm}

\begin{center}
\includegraphics[width=60mm,angle=-90]{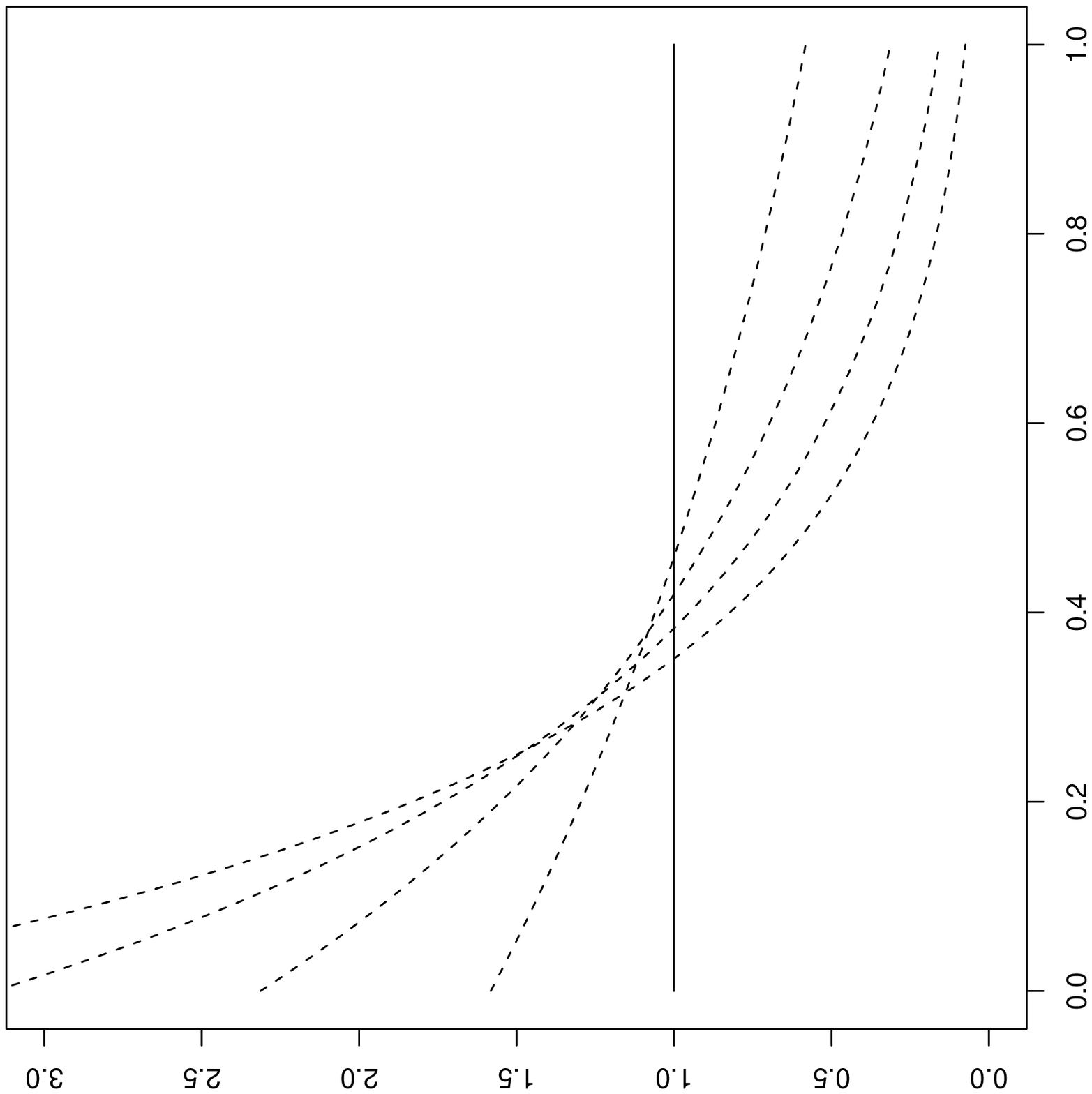} \hspace*{5mm}
\includegraphics[width=60mm,angle=-90]{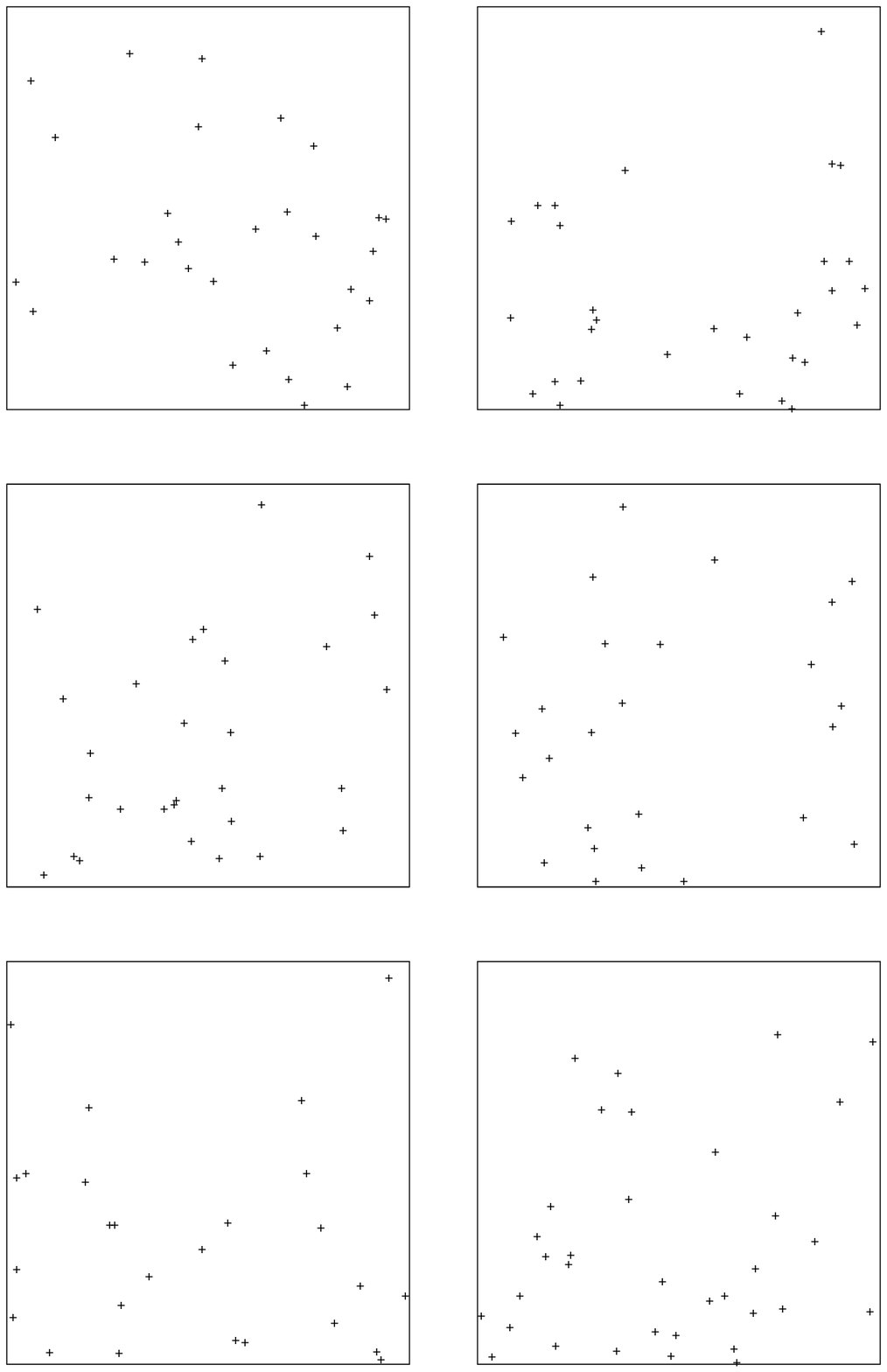} \hspace*{-2mm}
\vspace*{-1mm}

\caption{\small Left hand side: normalized intensity functions $f_{\kappa}$ plotted against their first coordinate; right hand side: six independent realizations from $\poisson(f_2 \Leb^2)$.}
\label{fig:alternatives}
\end{center}
\vspace*{-6mm}

\hspace*{2mm}
\end{figure}

We choose $N=12$, $\lambda=30$ for the simulation study, which is both realistic for actual data and keeps computation times at a tolerable level. One single test of two series of 12 point patterns takes less than three seconds (given the simulated null hypothesis distribution) on an ordinary laptop computer using the library {\sf spatstat} {(see \citealp{spatstat05})} that supplies tools for the analysis of spatial point patterns within the statistical computing environment {\sf R} {(\citealp{r07})}. Increasing either $N$ to $50$ or $\lambda$ to $110$ while keeping the other parameter fixed, still keeps the computation time well under one minute. Note that the optimal point assignments needed for computing $\donebar$, and also $\dtwobar$ between empirical measures, can be found efficiently (in $O \bigl( (m \vee n)^3 \bigr)$ steps, where $m$ and $n$ are the cardinalities of the point patterns) by using the so-called Hungarian method from linear programming \citep[see][Section~11.2]{papasteig98}.

Table~\ref{tab:powers} summarizes the results of our simulations. The first column gives the Monte Carlo powers of our test against $\poisson \bigl( \lambda f_{\kappa}(x,y) \Leb^2(d(x,y)) \bigr)$-alternatives, where
\begin{equation*}
   f_{\kappa}(x,y) = \frac{\kappa \exp(-\kappa x)}{1-\exp(-\kappa)}
\end{equation*} 
for $x,y \in [0,1]$ and $\kappa = 1,\ldots,4$. See Figure~\ref{fig:alternatives} to obtain an impression of the corresponding distributions. By Monte Carlo power we mean the fraction of the number of rejected tests in 100 independent simulations of the alternative. 

For many applications it would be desirable to generalize $\donebar$ by introducing an order parameter $p \geq 1$ and a cut-off value $c>0$, which leads to the definition of
\begin{equation*}
   \donebar^{\hspace*{1.5pt}(p,c)}(\xi,\eta) := \frac{1}{n} \biggl( \min_{\pi \in \Pi_n} \sum_{i=1}^{m}
         \min \bigl( c, \dzero(x_i, y_{\pi(i)}) \bigr)^p + c^{\hspace*{0.75pt}p} (n-m) \biggr)^{1/p}
\end{equation*}
for $\xi = \sum_{i=1}^m \delta_{x_i}, \eta = \sum_{j=1}^n \delta_{y_j} \in \mfn$ with $n \geq \max(m,1)$, where $\dzero$ of course no longer needs to be bounded and is just taken to be the Euclidean metric here.
We stick to the case $p=1$, but give in the second column of Table~\ref{tab:powers} the corresponding Monte Carlo powers if the cut-off is chosen to be $c=0.3$ instead of $1$, so that our test now puts less emphasis on cardinalities and more emphasis on positional differences in the compared point patterns than it did before. There is no strong reason for choosing exactly $c=0.3$; the value reflects the somewhat vague idea that in an optimal pairing of about $30$ points each, the pairing distances are ``usually'' still below 0.3. As one can see from Table~\ref{tab:powers}, the power improvement is very noticeable, and especially this second test detects the inhomogeneity quite well even if they are not very clearly visible by eyeball observation of the simulated data.

For comparison we have also added the results of the corresponding tests if $\donebar$ is replaced by $\done$. Since there is typically a wide range of values for the cardinalities of realizations of a Poisson process with $30$ expected points, and since differing cardinalities are not appropriately addressed by $\done$, these tests perform very poorly (for $c =0.3$, powers seem to lift off from $\kappa=9$ on).

\begin{table}[h]
\begin{center}
\begin{tabular}{|c||c|c||c|c|}
    \hline
    $\kappa$ & \raisebox{0pt}[12pt][0pt]{\makebox[20mm]{$\donebar$, $c=1$}}
             & \makebox[20mm]{$\donebar$, $c=0.3$}
             & \makebox[20mm]{$\done$, $c=1$}
             & \makebox[20mm]{$\done$, $c=0.3$} \\[0.5ex] \hline
             1 & 0.10 & 0.23 & 0.08 & 0.02 \\ 
             2 & 0.41 & 0.97 & 0.12 & 0.06 \\
             3 & 0.93 & 1.00 & 0.06 & 0.04 \\
             4 & 1.00 & 1.00 & 0.10 & 0.10 \\ \hline
\end{tabular}
\caption{\small Powers of the tests for two different cut-off values $c$ against increasingly conspicuous alternatives. The last two columns give the corresponding results when the test is based on the metric $\done$ instead of $\donebar$ and are listed for comparison only.} \label{tab:powers}
\end{center}

\vspace*{-6mm}

\hspace*{2mm}
\end{table}
In summary, the above procedure is rather successful for testing spatial homogeneity from multiple point patterns. We also have obtained promising first results when testing for spatial dependence, but a more extensive further study will be necessary in order to establish the possibilities and limitations of this test procedure and of tests or other statistical analyses based on the $\donebar$-metric in general.

%
%
\section*{Appendix: proofs left out in the main text}

\renewcommand{\thesubsection}{\Alph{section}.\arabic{subsection}}
\renewcommand{\theprop}{\Alph{section}.\Alph{prop}}
\renewcommand{\theequation}{\Alph{section}.\arabic{equation}}
\setcounter{section}{1}
\setcounter{subsection}{0}
\setcounter{prop}{0}
\setcounter{equation}{0}

\begin{proof}[Proof of Proposition~\ref{prop:donebarismetric}]
From the definition it is clear that $0 \leq \donebar(\xi,\eta) \leq 1$, that $\donebar(\xi, \eta) = 0$ if and only if $\xi = \eta$ and that $\donebar(\xi,\eta) = \donebar(\eta,\xi)$. To show the triangle inequality let $\xi = \sum_{i=1}^l \delta_{x_i}, \eta = \sum_{j=1}^m \delta_{y_j}, \zeta = \sum_{k=1}^n \delta_{z_k} \in \mfn$, and add two points $u_1$ and $u_2$ to $\mcx$, extending $\dzero$ by $\dzero(u_1,u_2) := \dzero(u_1,u) := \dzero(u_2,u) := 1$ for every $u \in \mcx$. 

Note that for $l=m=n$ it is straightforward to see that
\begin{equation} \label{eq:samecard}
   \min_{\pi \in \Pi_n} \sum_{i=1}^n \dzero(x_i, y_{\pi(i)}) \leq \min_{\pi \in \Pi_n} \sum_{i=1}^n 
   \dzero(x_i, z_{\pi(i)}) + \min_{\pi \in \Pi_n} \sum_{i=1}^n \dzero(z_i, y_{\pi(i)}),
\end{equation}
which is the essential step in proving the triangle inequality for $\done$.

We now prove that $\donebar(\xi,\eta) \leq \donebar(\xi,\zeta) + \donebar(\zeta,\eta)$, assuming that at most one of the point patterns is empty (otherwise the relation is clearly satisfied). Since this inequality is symmetric in $\xi$ and $\eta$, we assume without loss of generality that $l \leq m$ in what follows. We show two separate cases.

{\it Case~1, $m \leq n$}: Let $x_i := u_1$ for $l+1 \leq i \leq n$ and $y_j := u_2$ for $m+1 \leq j \leq n$, and write $\xibar := \sum_{i=1}^n \delta_{x_i}$ and $\etabar := \sum_{j=1}^n \delta_{y_j}$.
We then have
\begin{equation}
   \donebar(\xi,\eta) \leq \donebar(\xi,\etabar) = \donebar(\xibar,\etabar) \leq \donebar(\xibar,\zeta)
   + \donebar(\zeta,\etabar)   = \donebar(\xi,\zeta) + \donebar(\zeta,\eta),
\end{equation}
using that $a \leq m$ implies $\frac{a}{m} \leq \frac{a+n-m}{n}$ for the first inequality,
and~\eqref{eq:samecard} for the second inequality.

{\it Case~2, $m > n$}:
Let $x_i := z_k := u_1$ for $l+1 \leq i \leq l \vee n$ and $n+1 \leq k \leq l \vee n$, and choose {$y_1', \ldots y_{l \vee n}'$ in such a way that \raisebox{0pt}[12pt][5pt]{$\sum_{j=1}^{l \vee n} \delta_{y_j'} \leq \sum_{j=1}^{m} \delta_{y_j}$} and} $\min_{\pi \in \Pi_{l \vee n}} \sum_{i=1}^{l \vee n} \dzero(z_i, y_{\pi(i)}') = \min_{\pi \in \Pi_m} \sum_{i=1}^{l \vee n} \dzero(z_i, y_{\pi(i)})$.
We have
\begin{equation*}
\begin{split}
   \donebar(\xi,\eta)
   &= \frac{1}{m} \biggl( \min_{\pi \in \Pi_m} \sum_{i=1}^{l \vee n} \dzero(x_i,
      y_{\pi(i)}) + \bigl( m-(l \vee n) \bigr) \biggr) 
      \\
   &\leq \frac{1}{m} \biggl( \min_{\pi \in \Pi_{l \vee n}} \sum_{i=1}^{l \vee n} \dzero(x_i,
      y_{\pi(i)}') + \bigl( m-(l \vee n) \bigr) \biggr) 
      \\ 
   &\leq \frac{1}{m} \biggl( \min_{\pi \in \Pi_{l \vee n}} \sum_{i=1}^{l \vee n} \dzero \bigl( x_i,
      z_{\pi(i)} \bigr) + \min_{\pi \in \Pi_{l \vee n}} \sum_{i=1}^{l \vee n} \dzero \bigl( z_i,
      y_{\pi(i)}' \bigr) + \bigl( m-(l \vee n) \bigr) \biggr) \\
   &= \frac{1}{m} \biggl( \min_{\pi \in \Pi_{l \vee n}} \sum_{i=1}^{l \vee n} \dzero \bigl( x_i,
      z_{\pi(i)} \bigr) + \min_{\pi \in \Pi_m} \sum_{i=1}^{n} \dzero \bigl( z_i,
      y_{\pi(i)} \bigr) + \bigl( m-n \bigr) \biggr) \\[1.5mm]
   &\leq \donebar(\xi,\zeta) + \donebar(\zeta,\eta),
\end{split}
\end{equation*}
where we used \eqref{eq:samecard} for the second inequality.
\end{proof}

\begin{proof}[Proof of Proposition~\ref{prop:donebarprops}]
   Statement~(i) is straightforward from the definitions of $\dr$, $\donebar$ and $\done$.
   
   {\it Statement (ii).} Proposition~4.2 in \cite{xia05} states that $\xi_n \to \xi$ vaguely if and 
      only if $\done(\xi_n, \xi) \to 0$ as $n \to \infty$; so all we need to show is that the latter is 
      equivalent to $\donebar(\xi_n, \xi) \to 0$.
      
      If $\donebar(\xi_n, \xi) \to 0$, we have by (i) that $\dr(\abs{\xi_n},
      \abs{\xi}) \to 0$, from which it is easily seen that $\abs{\xi_n} \to \abs{\xi}$,
      i.e.\ there is an $n_0 \in \NN$ such that $\abs{\xi_n} = \abs{\xi}$ and hence $\donebar(\xi_n, \xi) =
      \done(\xi_n, \xi)$ for every $n \geq n_0$. Thus $\done(\xi_n, \xi) \to 0$.
      The converse direction follows immediately from $\donebar \leq \done$.
      
   {\it Statement (iii).} The local compactness and separability properties depend only on the generated 
      topology. See for example Proposition~4.3 in \cite{xia05} for the proof. Note that, by the 
      compactness of $\mcx$, the sets
      $\mfn_l := \{ \xi \in \mfn; \, \abs{\xi} = l \}$ are compact for all $l \in \Zplus$. 

      It remains to show the completeness. Let $(\xi_n)_{n \in \NN}$ be a $\donebar$-Cauchy sequence in 
      $\mfn$. It is straightforward to see that
      this implies the existence of an $n_0 \in \NN$ such that $\abs{\xi_n} = \abs{\xi_m}$ for every $n,m 
      \geq n_0$, which means that there is an $l \in \Zplus$ such that the tail of $(\xi_n)_{n \in \NN}$ is 
      a Cauchy sequence in $\mfn_l$. By the compactness of $\mfn_l$ this tail converges.
\end{proof}

\begin{proof}[Proof of Proposition~\ref{prop:dtwobarprops}]
   Statement~(i) is an immediate consequence of the Kantorovich-Rubin\-stein theorem, where the minimum is 
   attained, because $(\mfn, \donebar)$ is complete. See \cite{dudley89}, Section 11.8, for details.
   Statement~(ii) follows by taking expectations and minima in Proposition~\ref{prop:donebarprops}(i).
   The last statement follows from \cite{dudley89}, Theorem~11.3.3, using
   Proposition~\ref{prop:donebarprops}(ii) and noting that $\dtwobar$ is an instance of Dudley's
   $\beta$-metric.
\end{proof}

\section*{Acknowledgement}

This work was supported by the ARC Centre of Excellence for Mathematics and Statistics of Complex Systems (AX), the ARC Discovery Grant No.~00530/53001000 (DS), and the Schweizerischer Nationalfonds Fellowship No.~PBZH2-111668 (DS).\\ DS would like to thank Adrian Baddeley for stimulating discussions about spatial statistics.

\bibliographystyle{dcu}

\end{document}